\numberwithin{equation}{section}
\theoremstyle{plain}
\newtheorem{theorem}{Theorem}
\theoremstyle{definition}
\newtheorem{lemma}{Lemma}
\newtheorem{definition}{Definition}
\newtheorem{corollary}{Corollary}
\newtheorem{remark}{Remark}
\newcommand{\bigsum}{\displaystyle \sum_{i=1}^n}
\newcommand{\less}{\mathbb{I}_{X_i\leq t}}
\newcommand{\more}{\mathbb{I}_{X_i> t}}
\newcommand{\lessone}{\mathbb{I}_{X\leq t}}
\newcommand{\moreone}{\mathbb{I}_{X> t}}
\newcommand{\ind}{\indent \indent}
\newcommand{\tin}{ t \in (x_-,x_+)}
\begin{document}

\title{On a Clustering Criterion for Dependent Observations}
\author[Bharath]{Karthik Bharath}
\thanks{Department of Statistics,The Ohio State University, 1958 Neil Ave, Columbus, OH-43210. 
{\it E-mail.}
bharath.4@osu.edu}

\begin{abstract}
A univariate clustering criterion for stationary processes satisfying a $\beta$-mixing condition is proposed extending the work of \cite{KB2} to the dependent setup. The approach is characterized by an alternative sample criterion function based on truncated partial sums which renders the framework amenable to various interesting extensions for which limit results for partial sums are available. Techniques from empirical process theory for mixing sequences play a vital role in the arguments employed in the proofs of the limit theorems. 
\end{abstract}

\keywords{ Clustering; $\beta$-mixing; Truncated sums; LLN; CLT; Functional CLT.}

\maketitle
\section{Introduction}
The purpose of this article is two-fold: we propose a clustering criterion for observations from a smooth, invertible distribution function which is noticeably simpler than the one recently proposed by \cite{KB2}; we then demonstrate the power of its simplicity by proving limit theorems for clustering constructs under a dependence setup extending their work from the i.i.d. case. A pleasant by-product of the proposed framework is that the proofs of the limit results, in some cases, follow along similar lines to the ones in \cite{KB2}.

 \cite{JH2} considered the statistical underpinnings of the $k$-means clustering framework and derived asymptotic distributions of a suitably defined criterion function and its maximum. Given a set of observations (data), he defined a cluster to be the subset of the observations which are grouped together on the basis of a point which splits the data by maximizing the between-group sums of squares; in other words, he considered a criterion function which was based on maximizing the between cluster sums of squares as opposed to minimizing the within cluster sums of squares. \cite{KB2} too resorted to maximizing the between cluster sums of squares but deviated from Hartigan's framework and instead considered the zero of a certain function of the derivative of Hartigan's criterion function. Contrary to Hartigan's setup, which required the existence of a fourth moment for asymptotic results, they proved results for their clustering constructs under a second moment assumption with an added smoothness condition on the criterion function. Both works considered i.i.d. data and primarily the case $k=2$, viz. two clusters.   

We extend the approach used in \cite{KB2} to the case of dependent data satisfying a $\beta$-mixing condition and reprove all results under a dependent setup using techniques from empirical processes. We achieve this by employing an alternative sample version of the criterion function used in their paper which, by virtue of its definition, renders itself amenable to a variety of interesting scenarios. The circumscription of the proposed framework to the $\beta$-mixing case is not necessary and is artificial; our intention is to highlight the versatility of our definition of the sample criterion function and its deployment in investigating asymptotic behavior of the $k$-means clustering criterion for dependent observations. The results presented in this article ought to be viewed as a demonstration of the framework's amenability to sequences having a variety of dependence structures for which limit results for partial sums exist. The results presented in this article are also valid to sequences which satisfy a $\phi$-mixing condition since it represents a stronger condition in the sense that a $\phi$-mixing sequence is also $\beta$-mixing. 

Interest in $\beta$-mixing sequences has increased in recent years with attempts to develop stability bounds for learning algorithms. In many machine learning applications the i.i.d. assumption is not valid since the observations obtained from the learning algorithms usually display inherent temporal dependence. In fact, \cite{vidya} argues that $\beta$-mixing is ``just the right"assumption for the analysis of weakly-dependent sample points in machine learning, in particular since several learning results then carry 
over to the non-i.i.d. case. Another important application of $\beta$-mixing sequences is in modeling scenarios involving aperiodic and positively recurrent Markov chains; i.e. if $\{Y_i\}_{i \geq 1}$ is an aperiodic positive recurrent Markov chain, then it satisfies the $\beta$ mixing condition (see p.139 \cite{Rio}). This fact has been employed in several econometric applications; for an overview see \cite{Dou2}. In similar vein, the condition of $\phi$-mixing is also a fairly common assumption while analyzing stability of learning algorithms such as Support Vector Regression (SVR) (\cite{vapnik}), Kernel Ridge Regression (KRR) (\cite{saunders}) and Support Vector Machines (SVM) (\cite{CV}). Since $\phi$-mixing represents a stronger condition than $\beta$-mixing, stability bounds for $\beta$-mixing usually lead to good bounds for $\phi$-mixing too.  In a recent article, \cite{MR} proved stability bounds for stationary $\phi$-mixing and $\beta$-mixing processes with applications in the above mentioned algorithms. It is therefore of considerable interest, at least in machine learning applications, to move over from the i.i.d. setup to the $\beta$-mixing or $\phi$-mixing cases when weak dependence which decays over time is a reasonable modeling assumption. The results in this article can conceivably be employed while analyzing clustering properties of such sequences. 

In section \ref{prelim} we review some of the pertinent constructs from \cite{KB2} and \cite{JH2} and define the quantities of interest in this article: the theoretical and sample criterion functions and their respective zeroes. In section \ref{assumptions} we state our assumptions regarding the conditions on the distribution function $F$ and rate of decay of the $\beta$-mixing coefficients for the rest of the paper. Then, in section \ref{remarks}, we examine the nature of the criterion function as a statistical functional which induces a functional on the space of  c\'{a}dl\'{a}g functions and note some of the difficulties involved in directly using existing results. In section \ref{prep}, in anticipation of the arguments employed in the main results, we prove a few preparatory Lemmas and state two results from the literature which play an important role in our proofs. Section \ref{asymptotics} comprises the main results of this article. We first examine the sample criterion function and prove a weak convergence result; this then provides us with several useful corollaries which will then assist us in examining the sample split point. However, since the ``zero" of our sample criterion function is similar to the one in \cite{KB2}, the proofs of statements regarding its asymptotic behavior follow along almost identical lines as the ones for the empirical split point in \cite{KB2}; for ease of exposition and interests of brevity, we direct the interested reader to their paper for the details of the proofs. Finally, in section \ref{sim}, we provide numerical verification of the limit theorems via a simulation exercise.
\section{Preliminaries}\label{prelim}
Let us first consider the criterion function that was introduced in \cite{JH2} for partitioning a sample into two groups.
For a distribution function $F$,  if $Q$ is the quantile function associated with $F$, then the \emph{split function} of $Q$ at $p\in (0,1)$ is defined as
\begin{equation}\label{split*function}
 B(Q,p) = p\left[\frac{1}{p}\int_{q< p}Q(q)dq\right]^{2}+(1-p)\left[\frac{1}{1-p}\int_{q> p}Q(q)dq\right]^{2}-\left[\int_0^1Q(q)dq\right]^2.
\end{equation}
 A value $p_0 \in (0,1)$ which maximizes the split function is called the \emph{split point}. When $F$ is invertible ($Q$ is the unique inverse), \cite{KB2} considered a different criterion function defined as, for $0<p<1$, 
\[
G(p)=\frac{1}{p}\displaystyle \int_0^p Q(q)dq+\frac{1}{1-p}\displaystyle \int_p^1 Q(q)dq- 2Q(p),
\]
which is a function of the derivative of $B(Q,p)$ with respect to $p$. The zero of $G$ coincides with the maximum of the parabolic split function $B$. The linear statistical functional (of F) $G$ proffers a simple way to determine the split point when $F$ is invertible as opposed to Hartigan's parabolic split function which accommodated a general $F$.

If $X_{(i)},i=1,\ldots,n$ are order statistics corresponding to i.i.d. real-valued observations $X_i$ obtained from $F$, then the empirical counterpart of $G$, christened, the {\it Empirical Cross-over Function} (ECF), was defined in \cite{KB2} as 
\begin{equation*}\label{G_function}
 G_n(p)=\frac{1}{k}\sum_{j=1}^k X_{(j)}-X_{(k)} +\frac{1}{n-k}\sum_{j=k+1}^n X_{(j)}-X_{(k+1)},
\end{equation*}
for $\frac{k-1}{n}\leq p <\frac{k}{n}$
{ and
\begin{equation*}\label{G_function*at*1}
 G_n(p)=\frac{1}{n}\sum_{j=1}^n X_{(j)}-X_{(n)},
\end{equation*}
for $\frac{n-1}{n}\leq p <1$,  where $1 \leq k \leq n-1$. } 
The ``zero" of $G_n$ was the estimate of the theoretical split point and it was shown that the estimate was consistent, a CLT was proven and its utility was demonstrated on a real-world dataset. 

Our chief interest in this article is to examine an alternative definition for $G$ and consequently $G_n$---we change the domain of the functions from $(0,1)$ to an appropriate subset of the real line. This then permits us to come up with a sample version comprising of truncated sums, at fixed level, as opposed to trimmed sums used in the definition of $G_n$. We first redefine the criterion function. 
\begin{definition}
Consider the extended real line $\mathbb{R}\cup\{-\infty,\infty\}$. Let  $x_+=\sup\{x:F(x)<1\}$ and $x_-=\inf\{x:F(x)>0\}$, the upper and lower endpoints of $F$---they can both be infinite. Then, for $t\in [x_-,x_+]\subseteq \mathbb{R}$, the \emph{cross-over function} is defined as
\begin{equation}\label{T}
T(t)=\frac{1}{F(t)}\displaystyle \int _{-\infty}^t xdF+\frac{1}{1-F(t)}\displaystyle \int _t^{\infty} xdF-2t. 
\end{equation}
\end{definition}
Suppose $p_0$ is the zero of $G$, then the zero of $T$, say $t_0$, would be equal to $Q(p_0)$. Prior to defining the sample-based version of $T$, let us recall the definition of the empirical distribution function. For a sequence $X_i,1 \leq i \leq n$ (not necessarily i.i.d.), consider the usual empirical distribution function 
\[
F_n(t)= \left\{
\begin{array}{l l}
   0 & \quad \text{ if $t < X_{(1)}$}\\
  \frac{k}{n} &\quad \text { if $X_{(k)} \leq t <X_{(k+1)}$}\\
   1       & \quad \text{ if $t \geq X_{(n)}$ }   \\      
\end{array}\right.
\]
where $X_{(1)} \leq, \cdots,\leq X_{(n)}$ a.s. are the order statistics. 
\begin{definition}
For $t \in [X_{(1)},X_{(n)})$ the \emph{sample cross-over function} is defined as

\begin{align}\label{Tn}
T_n(t)&=\frac{1}{F_n(t)}\displaystyle \int _{-\infty}^t xdF_n+\frac{1}{1-F_n(t)}\displaystyle \int _t^{\infty} xdF_n-2t \nonumber\\
&=\frac{1}{nF_n(t)}\bigsum X_i\mathbb{I}_{X_i \leq t}+\frac{1}{n(1-F_n(t))}\bigsum X_i\mathbb{I}_{X_i > t}-2t,
\end{align}
where $\mathbb{I}_B$ is the indicator function of the set $B$. 
\end{definition}
We now extend the definition of $T_n$ onto $(x_-,x_+)$ by defining $a=T_n(X_{(1)})$ and $b=\lim_{t \uparrow  X_{(n)}}T_n$ so that $T_n$ is now defined on $[a,b]$ with 
$-\infty\leq x_-\leq a<b \leq x_+\leq \infty$. The right end-point $b$ exists since $T_n$, by virtue of its definition, is  c\'{a}dl\'{a}g. All results will be proved for $t \in (x_-,x_+)$ since with probability $1$ all limits lie within the compact set $[x_-,x_+]$. Unless stated otherwise, we shall assume throughout that $T_n$ (and its functionals) are all defined on $\tin$.

The problem usually associated with truncated sums is with the decision regarding the level of truncation. In our setup, however, since all levels of truncations are investigated for determining the sample split point, we are untroubled by the issue. The advantage with the truncated sums representation is in the fact that if $X_i$ are observations from a stationary sequence, then $T_n$ is basically a combination of partial sums of the random variables themselves for a fixed truncation level $t$ and not the order statistics. Indeed, literature is rife with limit results for stationary sequences which come in handy while examining the asymptotic behavior of $T_n$. 

\begin{remark}
The function $G$ (and $T$ on its appropriate domain), starting positive, crosses over zero at a point $p_0 \in (0,1)$, the split point. The point of crossing is of chief interest and the conditions on the distribution function $F$ which guarantee the uniqueness of $p_0$ remains an open question (see Remark 1 in \cite{KB2}). However, it is vitally important to control the behavior of $G$ in the vicinity of 0 and 1 since results involving the sample split point for two clusters are proven assuming \emph{only one} crossing of $G$. While it was noted in \cite{KB2} that the condition $\lim \sup _{p \uparrow 1}G(p)<0$ ensured the good behavior of $G$ near 1, the limit results for the sample split point were proved in $[a',b']$ for $0<a'<b'<1$. The function $T$ on the other hand, by virtue of its definition on $[x_-,x_+]$ quite readily satisfies
$$\lim \sup_{t \uparrow x_+}T(t) <0  \quad \text{and} \quad \lim \inf_{t \downarrow x_-}T(t) >0.$$
This property assists us in our definition of the sample split point. 
\end{remark}
Denote by $t_0$ the solution of $T(t)=0$.  We will assume throughout that the solution, $t_0$, is unique. We are now ready to define its empirical counterpart. 
\begin{definition}
For $n \geq 1$, the \emph{sample split point} pertaining to $T_n$ is defined as
\begin{equation*}
 t_n=\left\{
 \begin{array}{l}
 -\infty, \qquad \mbox{ if }T_n (t) <0  \quad \forall \tin;\\
 +\infty, \qquad \mbox{ if } T_n(t) >0 \quad \forall \tin;\\
\sup\{t \in (x_-,x_+): T_n(t) \geq 0\}, \mbox{ otherwise.}
 \end{array}
 \right.
\end{equation*}
\end{definition}

Armed with alternative definitions for the theoretical criterion function, its sample version and the sample split point, we now extend the clustering framework developed in \cite{KB2} to dependent observations arising from a population with a smooth and invertible distribution function. The partial-sums nature of $T_n$ offers a platform to consider almost any kind of dependence structure for which a law of large numbers and a central limit theorem of some sort exist.
\section{Assumptions}\label{assumptions}

\ind Let $\mathbf{X}=\{X_i\}_{i \geq 1}$ be a strictly stationary sequence of real valued random variables with distribution function $F$. In certain cases, if there is no confusion, for ease of notation, we shall denote by $X$ the canonical random variable from $F$. For the sequence $\mathbf{X}$, let
\[
\sigma_m=\sigma(X_1,\cdots,X_m)
\]
be the $\sigma$ field generated by the random variables $X_1,\cdots,X_m$. In similar fashion let
\[
\sigma_{m+k}=\sigma(X_{m+k},X_{m+k+1},\cdots).
\]
 We assume that the random variables satisfy a $\beta$-mixing condition with the $\beta$-mixing coefficient $\beta_k$ for the sequence $\mathbf{X}$ defined as:
\begin{equation}
\beta_k(\mathbf{X})=\sup_{m \geq 1}E \sup \{|P(B|\sigma_m)-P(B)|: B \in \sigma_{m+k}\}.
\end{equation} 
Suppose one obtains a strictly stationary $\beta$-mixing subsequence $X_1,\cdots, X_n$ of $\mathbf{X}$ as observations. We make the following assumptions:
\begin{description}
\item [$A1$] $F$ is invertible for $0<p<1$ and absolutely continuous with density $f$ with respect to the Lebesgue measure.
\item [$A2$] $E(X)=0$ and $E(X^2)=1$.
\item [$A3$] $F$ is twice continuously differentiable at any $t \in (x_-,x_+)$.
\item [$A4$] For $r>1$, $\beta_k=o(k^{-r})$.
\end{description}
Assumption $A4$ does not represent the weakest possible condition on the mixing rate guaranteeing the validity of our results. Our objective in this article is not to refine or develop the limit theory for $\beta$-mixing sequences; we primarily intend to use it as a device to incorporate dependence within observations which are to be clustered. We do not claim to prove our results under the weakest possible conditions. Indeed, to our knowledge, there does not exist a weak, universal condition on the rate of decay of the $\beta$-mixing coefficients which would guarantee convergences in most cases---unless, of course, the strongest known hitherto is assumed. Assumption $A2$ will be clarified in the subsequent section upon noting the invariance of the ``zero" of $T_n$ to scaling and translations of the observations. 
\section{Some remarks on $T$ and $T_n$}\label{remarks}
In this article, as with \cite{KB2}, we only consider the case of two clusters. The criterion function $T$ can be thought of as a statistical functional on the space of distribution functions, which induces a functional on $D[x_-,x_+]$, the space of c\'{a}dl\'{a}g functions on $[x_-,x_+]$. In other words, T can be represented as a statistical functional of $F$ defined as, for $t \in [x_-,x_+]$,
\[
V(F,t)=\frac{1}{F(t)}\int_{x_-}^t(s-t)dF(s)+ \frac{1}{1-F(t)}\int_t^{x_+}(s-t)dF(s).
\]
Equivalently, for fixed $t$, we have
\[
V(F,t)=V_1(F_1,t)+V_2(F_2,t),
\]
where
\begin{equation*}
\begin{aligned}[c]
F_1(s)=\left\{
    \begin{array}{l l}
   \frac{F(s)}{F(t)} & \quad \text{ if $x_- \leq s \leq t$}\\
   1       & \quad \text{ if $t<s<\leq x_+$ }   \\      
   
\end{array}\right.
\end{aligned}
,\qquad 
\begin{aligned}[c]
F_2(s)=\left\{
    \begin{array}{l l}
   \frac{F(s)-F(t)}{1-F(t)} & \quad \text{ if $t<s \leq x_+$}\\
   0       & \quad \text{ if $x_-\leq s\leq t$ }     \\      
  
\end{array}\right.
\end{aligned}
\end{equation*}
with $V_1(F_1,t)=\int(x-t)dF_1(x)$ and $V_2(F_2,t)=\int(x-t)dF_2(x)$. Hence, for fixed $t\in [x_-,x_+]$, $F_1$ and $F_2$ are the distribution functions of a real-valued random variable truncated above and below at $t$ respectively. Then, $V_1$ and $V_2$ are the respective mean functionals with $t$ subtracted and one is perhaps entitled to believe that existing limit results for differentiable statistical functionals can be employed with minimal fuss. Unfortunately, if $t$ is finite and $x_-$ is $-\infty$, for example in $V_1$, then the functional $F \mapsto \int_{[x_-,t]} sdF$ is not Hadamard differentiable with respect to the supremum norm from the domain of all distribution functions to $[x_-, t]$; the same is true for $V_2$ too (see, \cite{van}, problem $7$, p.303). This is so since the supremum norm offers us no control over the tails of the distributions. 

The intuition behind the sample criterion function is simple: $T_n$ is based on the distances between a point $t$ and the means of the observations bounded above and below by $t$. Then, by checking the distances for all possible $t$, one hopes to ascertain the $t$ at which the distances match up, viz., the function $T_n$ becomes $0$. Based on that particular $t$, one is then able to infer if the sample perhaps was obtained from a population with `more than one mean' or estimate the split point.

As in the case of $G_n$ used in \cite{KB2}, we note that the ``zero" of $T_n$ is invariant to scaling and translation of original data. Suppose $\{X_i\}_{1\leq i\leq n}$ is the sequence of interest and if $Z_i=u X_i+v$ for $v \in (x_-,x_+)$ and $u >0$, and we denote by $T^z_n$ the sample criterion function based on $Z_i$ with empirical cdf $F^z_n$, then, 
\begin{align*}
T^z_n(t)=\frac{1}{nF^z_n(t)}\bigsum Z_i\mathcal{I}_{Z_i \leq t}+\frac{1}{n(1-F^z_n(t))}\bigsum Z_i\mathcal{I}_{Z_i>t}-2t.
\end{align*}
It is an easy exercise to check that 
\[
T^z_n(t)=uT_n\Big(\frac{t-v}{u}\Big),
\]
and therefore, the ``zero" of $T_n$ remain unchanged. This elucidates the role played by $F_n$ in the definition of $T_n$: the presence of $F_n$ guarantees the invariance of $t_n$ with respect to scaling and translation.
\section{Preparatory results}\label{prep}
 In this section, in anticipation of the subsequent arguments to be employed in the proofs, we prove a few lemmas and state two results from the literature on empirical processes for dependent sequences. We first state a result from \cite{BY} which will be used in the proof of weak convergence of $T_n$. 
\begin{theorem}{\cite{BY}.}\label{BY}
Suppose $\mathcal{F}$ is a class of functions for which $N(\epsilon, d, \mathcal{F})=O_p(\epsilon^{-w})$ for some $w>0$. If $0 <k\leq 1$, then for any $0<s<k$ and $\epsilon>0$, as $n \to \infty$, 
$$ P\left[\sup_{f \in \mathcal{F}}n^{s/(1+s)}\left|\frac{1}{n}\displaystyle \sum _{i=1}^n f(X_i) -Ef(X_i)\right|>\epsilon\right] \to 0.$$
\end{theorem}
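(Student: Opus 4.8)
Abbreviate $Z_n(f)=\frac1n\sum_{i=1}^n\bigl(f(X_i)-Ef(X_i)\bigr)$ and $a_n=n^{s/(1+s)}$. Since the conclusion only asks for $\sup_{f\in\mathcal F}a_n|Z_n(f)|\to0$ \emph{in probability}, a Markov-inequality argument driven by a discretization of $\mathcal F$ is natural. First I would use the polynomial covering numbers to pass from the supremum over $\mathcal F$ to a finite maximum: conditionally on the (possibly random) entropy, fix a $\delta$-net whose cardinality $N(\delta,d,\mathcal F)$ is, by hypothesis, at most a power of $\delta^{-1}$ with high probability. For $f$ within $d$-distance $\delta$ of a net point $f_j$, split $a_n|Z_n(f)|\le a_n|Z_n(f_j)|+a_n|Z_n(f)-Z_n(f_j)|$; the oscillation term is rendered $o_P(1)$ by letting the finest mesh $\delta_n\downarrow0$ slowly and running a chaining argument along a geometric sequence of nets, which converges because $\sqrt{\log N(\delta,d,\mathcal F)}\lesssim\sqrt{w\log(1/\delta)}$ is integrable near $0$. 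It then remains to control $\max_j P\bigl[a_n|Z_n(f_j)|>\epsilon\bigr]$, the number of net points being kept at a slowly growing power of $n$.

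For a single function $f$---truncated at a level $M_n\uparrow\infty$ when $\mathcal F$ is unbounded, which is the case of interest here since the integrand $x\mapsto x\,\mathbb{I}_{x\le t}$ appearing in $T_n$ need not be bounded when $x_-=-\infty$, the discarded tail being $o(a_n^{-1})$ in mean under the moment hypothesis---I would estimate $E|Z_n(f)|^p$ for a large even $p$ by the big-block/small-block device: partition $\{1,\dots,n\}$ into alternating big blocks of length $\ell_n$ and small blocks of length $m_n$, couple the big-block sums to independent copies via Berbee's lemma with total error at most $(n/\ell_n)\beta_{m_n}$, and bound the aggregate small-block contribution by $O(n m_n/\ell_n)$, which is negligible after dividing by $n$ and multiplying by $a_n$. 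Applying a Rosenthal-type inequality to the resulting sum of independent, (post-truncation) bounded block sums and then Markov gives $P\bigl[a_n|Z_n(f)|>\epsilon\bigr]\le C_p\,\epsilon^{-p}\,a_n^{\,p}\,E|\widetilde Z_n(f)|^p$, whose right side is polynomially small in $n$ exactly because $a_n=n^{s/(1+s)}$ sits below the critical exponent; the hypothesis $s<k\le1$ is precisely what leaves room to choose $p$, $\ell_n$ and $m_n$ so that the coupling error, the small-block remainder and the Rosenthal bound can all be driven to zero at once, the number $k$ encoding the ceiling imposed by the mixing-rate and moment assumptions of \cite{BY}, which keep the exponent strictly below $n^{1/2}$.

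Assembling the pieces---$\delta_n\downarrow0$ and $\ell_n,m_n,M_n\to\infty$ at mutually compatible rates, with $p$ large---the union bound over the $O_p(\delta_n^{-w})$ net points times the per-point probability tends to $0$, the oscillation term is $o_P(1)$, and the coupling error vanishes, giving the claim. The delicate point, and the main obstacle, is the joint calibration in the pointwise step: $m_n$ must be large enough to kill $(n/\ell_n)\beta_{m_n}$ (a constraint tied to how fast $\beta_k$ decays), yet $\ell_n$ must stay small enough that the independent-block Rosenthal estimate still beats both $a_n^{\,p}$ and the slowly growing net cardinality; secondary difficulties are carrying the chaining control through when the metric entropy is only $O_p$ rather than deterministic, and the truncation bookkeeping forced by the unbounded integrands of $T_n$.
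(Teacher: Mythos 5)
This statement is not proved in the paper at all: it is quoted (somewhat loosely) from \cite{BY} and used purely as a black box in Lemma \ref{Fn} and Corollary \ref{GC}, so there is no in-paper proof to compare your attempt against. Your sketch does follow the strategy that underlies results of this type in \cite{BY} --- Berbee-type coupling of big blocks to independent copies, negligible small blocks, truncation, and a moment/chaining bound over a polynomial net --- so the overall route is the right one rather than a different one.

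However, as a proof it has a genuine gap at exactly the decisive point. The conclusion is a \emph{rate} statement, and the admissible exponent $s/(1+s)$ with $s<k$ is meaningful only once $k$ is tied quantitatively to the mixing rate (and moment/envelope conditions) of the sequence; in the source result $k$ is determined by the polynomial decay exponent of $\beta_m$, and the whole content of the theorem is that the block lengths $\ell_n,m_n$, the truncation level, and the moment order $p$ can be calibrated so that $(n/\ell_n)\beta_{m_n}$, the small-block remainder, and the Rosenthal/chaining bound all beat $a_n=n^{s/(1+s)}$ times the net cardinality. You explicitly leave this calibration as ``the delicate point'' and treat $k$ as an unspecified ``ceiling,'' so the step that actually produces the stated exponent is asserted, not verified; without it the argument proves only a qualitative ULLN, not the rate in the statement. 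Two secondary issues: the covering-number hypothesis should be read as a deterministic polynomial bound (the $O_p$ in the displayed statement is an artifact of transcription), so importing machinery for ``random entropy'' and chaining under an in-probability bound is unnecessary and, as sketched, not justified; and the digression about unbounded integrands $x\mapsto x\,\mathbb{I}_{x\le t}$ in $T_n$ concerns the later application (where the envelope is handled via assumption $A2$ and Lemma \ref{class}), not the theorem you are asked to prove, whose hypotheses you should take as given.
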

The following theorem by \cite{AY} proves a functional limit theorem for empirical process of $\beta$-mixing sequences indexed by a VC-class (Vapnik-\u{C}ervonenkis class) of functions. 
\begin{theorem}{\cite{AY}.}\label{AY}
Suppose $\mathcal{F}$ is a measurable uniformly bounded VC class of functions. If assumption $A4$ is satisfied, then there is a Gaussian process $\{G(f)\}_{f \in \mathcal{F}}$ which has a version with uniformly bounded and uniformly continuous paths with respect to the $\|.\|_2$ norm such that 
$$\left\{n^{-1/2} \displaystyle \sum _{i=1}^n (f(X_i)-Ef(X_i))\right\}_{f \in \mathcal{F}}\Rightarrow \{G(f)\}_{f \in \mathcal{F}}\qquad \text{ in  } \quad l^{\infty}(\mathcal{F}). $$
\end{theorem}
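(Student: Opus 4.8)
Since the assertion is a functional central limit theorem in $l^\infty(\mathcal{F})$, the plan is to establish the two conditions whose conjunction is equivalent to it: convergence of the finite-dimensional distributions of $\nu_n(f):=n^{-1/2}\sum_{i=1}^n\big(f(X_i)-Ef(X_i)\big)$, and asymptotic $\|\cdot\|_2$-equicontinuity (equivalently, asymptotic tightness) of the map $f\mapsto\nu_n(f)$. Once both are in hand, the limiting process $G$ is automatically a tight, centred Gaussian element of the space of bounded, $\|\cdot\|_2$-uniformly continuous functions on $\mathcal{F}$, so the path regularity claimed in the theorem is a by-product rather than a separate task. A preliminary observation: the covariance of $G$ is forced to be $\Gamma(f,g)=\sum_{k\in\mathbb{Z}}\mathrm{Cov}\big(f(X_0),g(X_k)\big)$, and one first checks this series converges absolutely --- since $\mathcal{F}$ is uniformly bounded, say by $M$, the covariance inequality for $\beta$-mixing sequences gives $\big|\mathrm{Cov}(f(X_0),g(X_k))\big|\lesssim M^2\beta_k$, and $\beta_k=o(k^{-r})$ with $r>1$ is summable.

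For the finite-dimensional convergence I would use the Bernstein big-block/small-block construction. Fix $f_1,\dots,f_m$ and partition $\{1,\dots,n\}$ into alternating long blocks of length $p_n$ and short blocks of length $q_n$ with $q_n/p_n\to0$, $p_n/n\to0$, $\ell_n:=n/p_n\to\infty$, and $\ell_n\beta_{q_n}\to0$; a polynomial schedule $p_n\asymp n^{a}$, $q_n\asymp n^{b}$ with suitable $0<b<a<1$ meets all four requirements precisely because of the rate in $A4$. By Berbee's coupling lemma the long-block partial sums can be realized, up to a total-variation error of order $\ell_n\beta_{q_n}=o(1)$, by independent blocks with the correct marginal law; to those I apply the Lindeberg--Feller CLT (Lindeberg's condition being immediate from uniform boundedness and $p_n/n\to0$), while the short-block contribution is $O_{L^2}(\sqrt{q_n/p_n})=o(1)$ and the asymptotic covariance is $\Gamma$ by the summable-covariance bound above. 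This gives $(\nu_n(f_1),\dots,\nu_n(f_m))\Rightarrow N(0,\Gamma)$.

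The substantive step --- and the one I expect to be the main obstacle --- is the asymptotic equicontinuity: for all $\epsilon,\eta>0$ there is $\delta>0$ with $\limsup_n P^*\big(\sup_{\|f-g\|_2<\delta}|\nu_n(f)-\nu_n(g)|>\epsilon\big)<\eta$. I would block and couple again: writing $H_j(f)=\sum_{i\in I_j}\big(f(X_i)-Ef(X_i)\big)$ for the $j$-th long block $I_j$, Berbee's lemma reduces the long-block part of $\nu_n$ to $\ell_n^{-1/2}\sum_{j=1}^{\ell_n}\widetilde H_j(f)$ with the $\widetilde H_j$ independent and the coupling error $o(1)$ under the same block schedule, whereas the short-block part is dispatched crudely: it is a sum of $\ell_n$ terms each bounded by $2Mq_n$, hence $O_P\big(\sqrt{\ell_n}\,q_n/\sqrt n\big)=o_P(1)$ uniformly over $\mathcal{F}$. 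For the independent-block process I would run symmetrization followed by a generic (Dudley-type) chaining bound, conditionally on the observations, exploiting the sub-Gaussian increments of Rademacher sums. The two quantitative inputs are: (i) a block-variance estimate $\mathrm{Var}\big(H_j(f)-H_j(g)\big)\lesssim p_n\,\rho(f,g)^2$ for a pseudometric $\rho$ that is controlled by $\|\cdot\|_2$ --- this follows from the $\beta$-mixing covariance inequality applied to the bounded increments, and it matches the $p_n$-normalization so that the random $L^2$-metric governing the blocked process is, after scaling, comparable to $\rho$; and (ii) the VC hypothesis, which yields the uniform covering bound $\sup_Q N\big(uM,\mathcal{F},L^2(Q)\big)\le Cu^{-w}$ and hence a finite uniform entropy integral $J(\delta)=\int_0^{\delta}\sqrt{\log\big(Cu^{-w}\big)}\,du$ with $J(\delta)\downarrow0$ as $\delta\downarrow0$. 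Chaining then bounds the modulus of continuity of the coupled independent-block process by a constant multiple of $J(\delta)$ uniformly in $n$, and since $\rho(f,g)\to0$ whenever $\|f-g\|_2\to0$ and $\mathcal{F}$ has finite $\|\cdot\|_2$-diameter, this upgrades to $\|\cdot\|_2$-equicontinuity and simultaneously forces $G$ to have $\|\cdot\|_2$-continuous paths.

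The points requiring the most care are: choosing $p_n,q_n$ so that the short blocks, the Berbee error, and the block count all behave correctly at once --- this is exactly where $r>1$ in $A4$ is spent; and making the chaining estimate genuinely uniform over the infinite class $\mathcal{F}$, i.e.\ obtaining an exponential maximal inequality for the coupled, symmetrized block sums whose tail is summable against the polynomial VC entropy. The measurability hypothesis on $\mathcal{F}$, together with separability of a VC class in $L^2(Q)$, lets me replace outer probabilities by ordinary ones throughout, so no nonmeasurability subtleties intervene. Finally, assembling finite-dimensional convergence with asymptotic equicontinuity through the standard characterization of weak convergence in $l^\infty(\mathcal{F})$ delivers $\nu_n\Rightarrow G$ with the stated regularity, completing the argument.
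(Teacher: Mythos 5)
This statement is not proved in the paper at all: it is imported verbatim from \cite{AY} as a known black-box input (it is used only to establish tightness of the truncated-mean processes in Theorem \ref{FCLT}), so there is no in-paper proof to compare against. Your outline is, in substance, the strategy of the actual Arcones--Yu proof: absolute summability of the covariance series from the mixing covariance inequality plus $A4$, finite-dimensional CLT via Bernstein big-block/small-block decomposition with Berbee coupling and Lindeberg--Feller, and asymptotic $\|\cdot\|_2$-equicontinuity via coupling to independent blocks followed by symmetrization and chaining against the uniform VC entropy integral. So as a road map it is the right proof, not an alternative one.

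Two steps are stated more casually than they can be carried out. First, your Lindeberg verification: boundedness of the blocks by $2Mp_n$ makes Lindeberg trivial only if $p_n=o(n^{1/2})$, not merely $p_n/n\to 0$; you then need $p_n=o(n^{1/2})$, $q_n=o(p_n)$ and $\ell_n\beta_{q_n}\to 0$ simultaneously, which still admits a polynomial schedule for every $r>1$ (take $a\in(1/(1+r),1/2)$ and $b<a$ with $br>1-a$), but this is exactly where the constraint bites and it should be spelled out. Second, the small blocks cannot be ``dispatched crudely'' in the equicontinuity step: the deterministic bound is $\ell_n q_n\cdot 2M/\sqrt n$, which under any admissible schedule with $p_n=o(n^{1/2})$ does \emph{not} tend to zero, and the $O_P\bigl(\sqrt{\ell_n}\,q_n/\sqrt n\bigr)$ bound you invoke is an $L^2$ bound for a \emph{fixed} $f$, not a bound on the supremum over $\mathcal{F}$. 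To make the small-block contribution uniformly negligible you must run the same coupling--symmetrization--chaining machinery (or an equivalent maximal inequality with the VC entropy) on the small-block empirical process as well, exploiting that its total length $\ell_n q_n$ is $o(n)$; this is how the published argument handles it, and without it your equicontinuity proof has a genuine hole.
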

\begin{lemma}\label{beta_sigma}
If $\{X_i\}_{i\geq1}$ is a stationary sequence satisfying a $\beta$-mixing condition, then for any measurable function $h$, the sequence $\{h(X_i)\}_{i \geq 1}$ satisfies the $\beta$-mixing condition (\cite{BY}). 
\end{lemma}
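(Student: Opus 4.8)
The plan is to prove the slightly stronger quantitative statement $\beta_k(\{h(X_i)\}_{i\geq 1}) \le \beta_k(\mathbf{X})$ for every $k \ge 1$, from which both the $\beta$-mixing property and the preservation of the rate in Assumption $A4$ follow at once. The whole argument hinges on a single measurability observation: since $h$ is measurable, $\sigma(h(X_1),\dots,h(X_m)) \subseteq \sigma(X_1,\dots,X_m)=\sigma_m$, and likewise $\sigma(h(X_{m+k}),h(X_{m+k+1}),\dots) \subseteq \sigma_{m+k}$. Thus passing from $\mathbf{X}$ to $\{h(X_i)\}$ only replaces the two $\sigma$-fields occurring in the definition of $\beta_k$ by smaller ones, and one must check that this cannot increase the coefficient.

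To make that monotonicity transparent I would invoke the standard total-variation / partition characterization of the $\beta$-coefficient: for $\sigma$-fields $\mathcal{A},\mathcal{B}$,
$$\beta(\mathcal{A},\mathcal{B}) = \sup \tfrac{1}{2}\sum_{i\in I}\sum_{j\in J}\bigl|P(A_i\cap B_j)-P(A_i)P(B_j)\bigr|,$$
the supremum running over all finite $\mathcal{A}$-measurable partitions $\{A_i\}_{i\in I}$ and finite $\mathcal{B}$-measurable partitions $\{B_j\}_{j\in J}$ of the sample space; one notes this coincides with the form $E\sup\{|P(B|\mathcal{A})-P(B)|:B\in\mathcal{B}\}$ used in the definition of $\beta_k(\mathbf{X})$. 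Because any partition measurable with respect to a sub-$\sigma$-field is a fortiori measurable with respect to the larger one, the supremum taken over the smaller partition classes cannot exceed the one over the larger classes, so $\beta(\mathcal{A}',\mathcal{B}')\le\beta(\mathcal{A},\mathcal{B})$ whenever $\mathcal{A}'\subseteq\mathcal{A}$ and $\mathcal{B}'\subseteq\mathcal{B}$. Applying this with $\mathcal{A}=\sigma_m$, $\mathcal{B}=\sigma_{m+k}$ and their $h$-images and then taking the supremum over $m\ge 1$ yields $\beta_k(\{h(X_i)\})\le\beta_k(\mathbf{X})$, hence $\beta_k(\{h(X_i)\})\to 0$.

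Finally I would record that strict stationarity of $\{h(X_i)\}_{i\ge 1}$ is inherited directly from that of $\mathbf{X}$: for any indices $i_1<\dots<i_\ell$ and any shift $t\ge 0$, the vector $(h(X_{i_1+t}),\dots,h(X_{i_\ell+t}))$ is a fixed measurable image of $(X_{i_1+t},\dots,X_{i_\ell+t})$, which has the same law as $(X_{i_1},\dots,X_{i_\ell})$; hence the finite-dimensional distributions of $\{h(X_i)\}$ are shift-invariant. Combined with the coefficient bound, this shows $\{h(X_i)\}$ is a stationary $\beta$-mixing sequence, with mixing rate at least as fast as that of $\mathbf{X}$.

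The only genuinely delicate point is this monotonicity of the $\beta$-coefficient under simultaneously shrinking both $\sigma$-fields. Working directly with the $E\sup_{B}|P(B|\mathcal{A})-P(B)|$ expression is awkward, since shrinking the conditioning field $\mathcal{A}$ does not obviously decrease $\sup_{B}|P(B|\mathcal{A})-P(B)|$ pointwise, and the outer expectation interacts poorly with the inner supremum; routing the argument through the partition characterization (equivalently, the $\|P_{\mathcal{A}\vee\mathcal{B}}-P_{\mathcal{A}}\otimes P_{\mathcal{B}}\|_{TV}$ representation), where the inequality is immediate, is what makes the proof clean.
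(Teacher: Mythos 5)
Your proof is correct and takes essentially the same route as the paper's: the paper's one-line argument is precisely the observation that $\sigma(h(X_1),\dots,h(X_m))\subseteq\sigma_m$ and $\sigma(h(X_{m+k}),\dots)\subseteq\sigma_{m+k}$, so the $\beta$-coefficients of $\{h(X_i)\}$ are dominated by those of $\mathbf{X}$. Your justification of the monotonicity step via the partition characterization of the $\beta$-coefficient, together with the stationarity check, merely fills in details the paper leaves implicit.
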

\begin{proof}
For any measurable function $h$, observe that the $\sigma$-field of $h(X)$ is contained in the $\sigma$-field of $X$. Therefore, the function $h$ has its $\beta$-mixing rate bounded by the corresponding rate of the original sequence and the sequence $\{h(X_i)\}_{i \geq 1}$ satisfies the $\beta$-mixing condition. 
\end{proof}
One way of quantifying the `size' of a class of functions is by the notion of \emph{covering number}; we recall its definition (see p. 275 of \cite{van}, for instance).
\begin{definition}
The covering number $N(\epsilon, \mathcal{F},d)$  related to semi-metric $d$ on a class $\mathcal{F}$ of functions, is the defined as
$$N(\epsilon, \mathcal{F},d)=\text{min}_m \left\{ \text{there are } g_1,\ldots,g_m \text{in  } L^1 \text{ such that } \text{min}_j d(f,g_j) \leq \epsilon \text{ for any $f$ in $\mathcal{F}$}\right \}. $$
\end{definition}
A well-known result from empirical process theory states that  Vapnik-\u{C}ervonenkis (VC) classes are examples of polynomial classes in the sense that their covering numbers are bounded by a polynomial in $\epsilon ^{-1}$ uniformly over all probability measures for which the class $\mathcal{F}$ is not identically zero (see Lemma 19.15, p. 275 of \cite{van}). The following Lemma asserts a Glivenko-Cantelli-type result for the empirical distribution function defined using $\beta$-mixing random variables. 
\begin{lemma}\label{Fn}
For any $\epsilon >0$, under assumptions $A1-A4$, 
\begin{equation*}
P\left[\sup_{t \in [x_-,x_+]}|F_n(t)-F(t)|>\epsilon\right] \to 0 \qquad  \textrm{as } n \to \infty.
\end{equation*}
\end{lemma}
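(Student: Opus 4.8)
The plan is to read $F_n(t)-F(t)$ as a centered empirical average indexed by the class of left--unbounded indicators, and to invoke the uniform law of large numbers for $\beta$--mixing sequences, Theorem~\ref{BY}. First I would write, using stationarity,
\[
F_n(t)-F(t)=\frac1n\sum_{i=1}^n\big(f_t(X_i)-Ef_t(X_1)\big),\qquad f_t:=\mathbb{I}_{(-\infty,t]},
\]
so that the quantity to be controlled is $\sup_{f\in\mathcal F}\big|\frac1n\sum_{i=1}^n f(X_i)-Ef(X_i)\big|$ with $\mathcal F=\{f_t:\ t\in[x_-,x_+]\}$.

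Next I would verify that $\mathcal F$ meets the hypotheses of Theorem~\ref{BY}. It is uniformly bounded (by $1$), and the family of sets $\{(-\infty,t]:t\in\mathbb R\}$ is a VC class, so $\mathcal F$ is a VC class of functions; by the result quoted just after the definition of covering number (Lemma~19.15 of \cite{van}), its covering numbers are bounded by a polynomial in $\epsilon^{-1}$, uniformly over probability measures, hence $N(\epsilon,\mathcal F,d)=O_p(\epsilon^{-w})$ for some $w>0$. The mixing requirement is also met: the empirical average above is driven by the sequence $\{X_i\}$ itself, which satisfies $A4$, and for each fixed $t$ the sequence $\{f_t(X_i)\}_i$ inherits the $\beta$--mixing property with no worse a rate by Lemma~\ref{beta_sigma}.

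With these verifications in hand, Theorem~\ref{BY} applied to $\mathcal F$ yields, for $0<k\le 1$ and any $0<s<k$,
\[
P\!\left[\sup_{t\in[x_-,x_+]}n^{s/(1+s)}\,|F_n(t)-F(t)|>\epsilon\right]\to0,
\]
and dropping the factor $n^{s/(1+s)}\ge1$ gives the stated conclusion (in fact with a polynomial rate). Alternatively, since $\mathcal F$ is a uniformly bounded VC class, Theorem~\ref{AY} gives $n^{-1/2}\sup_t|F_n(t)-F(t)|$ convergence in distribution, which also forces $\sup_t|F_n(t)-F(t)|\to0$ in probability; I would mention this but use Theorem~\ref{BY} as the primary route.

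The only genuine work is the bookkeeping of the middle step: confirming that the indicator class is a polynomial/VC class and that the mixing hypothesis built into Theorem~\ref{BY} is supplied by $A4$ together with Lemma~\ref{beta_sigma}; once that is done the lemma is immediate. A minor point to dispatch is that the index set $[x_-,x_+]$ may be unbounded, but since the indicator class over all of $\mathbb R$ is already VC and $F_n,F$ extend to $0$ and $1$ at the endpoints, restricting $t$ to $[x_-,x_+]$ only shrinks the supremum and causes no difficulty. I would not expect assumptions $A1$--$A3$ to enter beyond ensuring that $F$ is a genuine continuous distribution function.
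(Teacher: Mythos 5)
Your proposal follows essentially the same route as the paper: recognize $\{\mathbb{I}_{(-\infty,t]}\}$ as a VC class with polynomially bounded covering numbers and apply Theorem~\ref{BY} under the mixing assumption $A4$, which is exactly the paper's argument (your bookkeeping of the hypotheses is in fact more careful than the paper's one-line justification). The only small slip is in your side remark on the alternative route, where the scaling from Theorem~\ref{AY} should be $n^{1/2}\sup_t|F_n(t)-F(t)|$ rather than $n^{-1/2}$, but since that is not your primary argument it does not affect the proof.
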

\begin{proof}
Note that the class $\{\mathbb{I}_{(-\infty,t]}: t \in [x_-,x_+]\}$ is a VC class with index bounded above by $(n+1)$ since $F_n(t)$ is based on $n$ random variables. The class hence has a covering number which is $O_p(\epsilon^{-1})$ for $\epsilon >0.$ From Theorem \ref{BY}, we have the result for $\beta$-mixing sequences. Therefore, the classical Glivenko-Cantelli property for sums of random variables is preserved under $\beta$-mixing. 
\end{proof}
Using the result from the preceding Lemma, we quantify the size of the classes of functions which are of interest in the article. 
\begin{lemma}\label{class}
The classes of functions $\mathcal{G}_1=\{X\mathbb{I}_{ X\leq t},  t \in [x_-,x_+]\}$ and $\mathcal{G}_2=\{X \mathbb{I}_{X >t},  t \in [x_-,x_+]\}$ are VC with VC-index $2$.
\end{lemma}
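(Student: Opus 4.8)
The plan is to pass to the subgraph sets of the functions and argue combinatorially; trying to build $\mathcal{G}_1,\mathcal{G}_2$ from known VC-subgraph classes via the stability properties in \cite{van} would only deliver a bound on the index rather than the exact value $2$, so a direct argument is preferable. For $t\in[x_-,x_+]$ write $g_t(x)=x\mathbb{I}_{x\le t}$ for the generic member of $\mathcal{G}_1$ and let $C_t=\{(x,s)\in\mathbb{R}^2: s<g_t(x)\}$ be its subgraph; recall that $\mathcal{G}_1$ is a VC class of functions with index $V$ precisely when $V$ is the least integer for which the collection $\{C_t:t\in[x_-,x_+]\}$ shatters no set of $V$ points (see p.\ 275 of \cite{van}). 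So I must show that this collection shatters some one-point set but no two-point set; since shattering is monotone, this pins down the index.

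The crucial observation is that $g_t(x_0)$ equals $x_0$ when $t\ge x_0$ and equals $0$ when $t<x_0$, so $g_t(x_0)$ takes only two values as $t$ ranges over $\mathbb{R}$, and consequently $t\mapsto \mathbb{I}[(x_0,s_0)\in C_t]$ is constant on $(-\infty,x_0)$ and constant on $[x_0,\infty)$ — it has at most one jump, located at $t=x_0$. Now I would take two arbitrary points $(x_1,s_1),(x_2,s_2)$ with, say, $x_1\le x_2$. As $t$ increases through $\mathbb{R}$ the pair of membership indicators $\big(\mathbb{I}[(x_1,s_1)\in C_t],\,\mathbb{I}[(x_2,s_2)\in C_t]\big)$ can change only at $t=x_1$ and at $t=x_2$, hence it assumes at most three distinct values in $\{0,1\}^2$ (one on $(-\infty,x_1)$, one on $[x_1,x_2)$, one on $[x_2,\infty)$, and fewer when $x_1=x_2$). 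Since shattering the two points requires all four elements of $\{0,1\}^2$ to be realized, no two-point set can be shattered, and the index is at most $2$.

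For the matching lower bound I would exhibit a shattered singleton. Choose $x_0$ with $0<x_0<x_+$ (possible since $x_+>0$ under $A2$) and $0\le s_0<x_0$. For $t\ge x_0$ one has $g_t(x_0)=x_0>s_0$, so $(x_0,s_0)\in C_t$, whereas for $t<x_0$ one has $g_t(x_0)=0\le s_0$, so $(x_0,s_0)\notin C_t$; both subsets of $\{(x_0,s_0)\}$ are thus obtained, so a one-point set is shattered and the index is at least $2$. Hence $\mathcal{G}_1$ has VC-index exactly $2$. For $\mathcal{G}_2$ put $h_t(x)=x\mathbb{I}_{x>t}$; then $h_t(x_0)$ equals $x_0$ for $t<x_0$ and $0$ for $t\ge x_0$, so the membership indicator of a fixed point still has a single jump at $t=x_0$, and the same two arguments give VC-index $2$ for $\mathcal{G}_2$.

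I expect the only delicate point to be making the count of achievable membership patterns airtight — in particular handling the degenerate case $x_1=x_2$, and noting that the argument is insensitive to whether one uses the open subgraph $\{s<g_t(x)\}$ or the closed subgraph $\{s\le g_t(x)\}$, since in either convention the membership of a fixed point as a function of $t$ still changes at most once, at $t=x_0$. Everything else is bookkeeping.
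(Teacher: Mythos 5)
Your proof is correct, and it takes a genuinely different route from the paper's. The paper argues via an ordering property: it claims the subgraphs of $\mathcal{G}_1$ are increasing in $t$, so that a nested family cannot shatter two points, invokes Lemma \ref{Fn} for the VC property of the underlying indicator class, and handles $\mathcal{G}_2$ by viewing it as a ``complement'' of $\mathcal{G}_1$. You instead fix a point $(x_0,s_0)$, observe that $t\mapsto\mathbb{I}\left[(x_0,s_0)\in C_t\right]$ has at most one jump (at $t=x_0$), deduce that two points can realize at most three of the four membership patterns needed for shattering, and then exhibit a shattered singleton for the matching lower bound, treating $\mathcal{G}_2$ symmetrically. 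Your argument is in fact the more robust of the two: the nestedness the paper relies on is not literally true once negative arguments are allowed (which they are, since $E(X)=0$ under $A2$) --- for $x_0<0$ the section of the subgraph over $x_0$ shrinks from $(-\infty,0)$ to $(-\infty,x_0)$ as $t$ increases past $x_0$ --- whereas your one-jump-per-point count needs no monotonicity at all and still yields the bound ``no two-point set is shattered.'' You also verify the lower bound (a shattered one-point set), which is needed for the exact value $2$ but is only asserted in the paper, and your direct treatment of $\mathcal{G}_2$ avoids the loose complementation step, under which VC-ness but not necessarily the exact index is preserved; note that both values of $t$ used for your singleton can indeed be taken inside $[x_-,x_+]$ because $x_-<0<x_+$ under $A2$. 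The price is a slightly longer bookkeeping argument (the case $x_1=x_2$, open versus closed subgraphs), which you handle correctly; the paper's version is shorter but leans on an ordering that holds only on the positive half-line.
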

\begin{proof}
Note that the subgraphs of $\mathcal{G}_1$, $\{(t,u):X\mathbb{I}_{ X\leq t}<u \}$ are increasing in $t$. This implies that they cannot shatter any set of $2$ points. More precisely, from Lemma \ref{Fn}, since $\{\mathbb{I}_{(-\infty,t]}: t \in [x_-,x_+]\}$ is a VC class of sets and the subgraphs of $\mathcal{G}_1$ are increasing, the smallest number of points $k$,  for which no set of size $k$ is shattered is 2. Since the VC index of $\mathcal{G}_1$ is finite, it is VC. Furthermore, owing to the fact that $\mathcal{G}_2=\mathcal{G}_1^c$, the VC-property is preserved. 
\end{proof}

\begin{corollary}\label{GC}
For any $\epsilon>0$ and for a fixed $t$, under assumptions, $A1-A4$, 
\begin{enumerate}
\item $P\left[\sup_{t \in [x_-,x_+]}\left|\frac{1}{n}\bigsum X_i\less - EX\lessone\right|>\epsilon\right]\rightarrow 0 \qquad \textrm{as }n \rightarrow \infty.$

\item  $P\left[\sup_{t \in [x_-,x_+]}\left|\frac{1}{n}\bigsum X_i\more - EX\moreone\right|>\epsilon\right]\rightarrow 0 \qquad \textrm{as }n \rightarrow \infty.$
\end{enumerate}
\end{corollary}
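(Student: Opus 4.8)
The plan is to observe that Corollary \ref{GC} is an immediate consequence of the uniform law of large numbers from Theorem \ref{BY} applied to the two specific classes whose VC-property was just established in Lemma \ref{class}. First I would recall that, by Lemma \ref{class}, the classes $\mathcal{G}_1 = \{x\mathbb{I}_{x \leq t} : t \in [x_-,x_+]\}$ and $\mathcal{G}_2 = \{x\mathbb{I}_{x > t} : t \in [x_-,x_+]\}$ are VC with VC-index $2$. Since a well-known result from empirical process theory (Lemma 19.15, p.275 of \cite{van}, cited above) guarantees that VC classes have covering numbers bounded by a polynomial in $\epsilon^{-1}$ uniformly over all probability measures for which the class is not identically zero, we obtain $N(\epsilon, \mathcal{G}_j, d) = O_p(\epsilon^{-w})$ for some $w > 0$ (in fact $w$ can be taken close to $1$), which is precisely the hypothesis required by Theorem \ref{BY}.

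Next I would verify that the sequences of functions evaluated along the data are themselves $\beta$-mixing so that Theorem \ref{BY} applies. For fixed $t$, the map $x \mapsto x\mathbb{I}_{x \leq t}$ is measurable, so by Lemma \ref{beta_sigma} the sequence $\{X_i\mathbb{I}_{X_i \leq t}\}_{i \geq 1}$ inherits the $\beta$-mixing property of $\{X_i\}_{i \geq 1}$ with mixing coefficients bounded by the original ones; in particular assumption $A4$ continues to hold for these transformed sequences. The same reasoning applies to $x \mapsto x\mathbb{I}_{x > t}$. Thus all the hypotheses of Theorem \ref{BY} are in force for each of the classes $\mathcal{G}_1$ and $\mathcal{G}_2$.

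Applying Theorem \ref{BY} with the class $\mathcal{G}_1$ then yields, for any $0 < s < k$ and any $\epsilon > 0$,
\[
P\left[\sup_{t \in [x_-,x_+]} n^{s/(1+s)}\left|\frac{1}{n}\bigsum X_i\less - EX\lessone\right| > \epsilon\right] \to 0
\]
as $n \to \infty$; dropping the positive normalizing factor $n^{s/(1+s)} \geq 1$ only weakens the statement, giving part $(1)$. Part $(2)$ follows identically by applying Theorem \ref{BY} to the class $\mathcal{G}_2$, whose VC-property was established in Lemma \ref{class} via the complementation $\mathcal{G}_2 = \mathcal{G}_1^c$. I would also remark that the finiteness of the centering constants $EX\lessone$ and $EX\moreone$ is guaranteed by assumption $A2$, which gives $E|X| < \infty$ so that the truncated means are well-defined and finite.

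The main obstacle, such as it is, is purely bookkeeping: one must check that the integrability condition implicit in Theorem \ref{BY} is met for the envelope of the classes $\mathcal{G}_1,\mathcal{G}_2$ — here the natural envelope is $|X|$, which is integrable (indeed square-integrable) by $A2$ — and that the covering-number bound holds with the required uniformity over probability measures. Both points are standard consequences of the VC machinery already invoked in Lemmas \ref{Fn} and \ref{class}, so no genuinely new argument is needed; the corollary is essentially a direct specialization of the preparatory results.
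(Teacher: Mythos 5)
Your proof is correct and takes essentially the same route as the paper, whose entire argument is that the corollary is an immediate consequence of Lemma \ref{class} and Theorem \ref{BY}; your extra steps (covering-number bound from the VC property, $\beta$-mixing of the transformed sequences via Lemma \ref{beta_sigma}, and dropping the factor $n^{s/(1+s)}\geq 1$) merely spell out details the paper leaves implicit.
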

\begin{proof}
The proof is an immediate consequence of Lemma \ref{class} and Theorem \ref{BY}.
\end{proof}
It is therefore the case that the classes $\mathcal{G}_1$ and $\mathcal{G}_2$ consisting of truncated-at-fixed level functions have covering numbers which are $O_p(\epsilon^{-1})$ and are therefore VC. Observe that the classes $\mathcal{G}_1$ and $\mathcal{G}_2$ are uniformly bounded by $X$ which is in $L^2$ owing to assumption $A2$.

\section{Asymptotics}\label{asymptotics}
\subsection{Weak convergence of $T_n$}\label{Tnlimits}
We now have all the ingredients required to prove the weak convergence of $T_n$. Suppose we denote $U_n(t)=\sqrt{n}(T_n(t)-T(t))$. The presence of $F_n$ in the definition of $T_n$ renders $U_n(t)$ unsuitable for representation as an empirical process indexed by classes of functions; we are hence unable to use results from empirical processes under $\beta$-mixing to directly obtain the asymptotic normality of $U_n$. For convenience, we define a few quantities first.  Let $\mu_{l}(t)=\frac{1}{F(t)}EX\lessone$ and $\mu_{u}(t)=\frac{1}{1-F(t)}EX\moreone$ . Also, let
\begin{equation}\label{xi}
\xi_t=\frac{1}{F(t)}\Big(X-\mu_{l}(t)\Big)\lessone+\frac{1}{1-F(t)}\Big(X-\mu_{u}(t)\Big)\moreone.
\end{equation}
Note that based on Lemma \ref{beta_sigma}, for fixed $t$, $\{\xi^0_t,\xi^{(i)}_t\}_{i\geq1}$ is a $\beta$-mixing sequence. Denote by $ l^{\infty}[x_-,x_+]$ the space of bounded functions on $[x_-,x_+]$.
\begin{theorem}\label{FCLT}
Under assumptions $A1-A4$, there is a centered Gaussian process $\{U(t),t \in [x_-,x_+]\}$ which has a version with uniformly bounded and uniformly continuous paths with respect to the $\|\cdot\|_2$ norm such that
\[
U_n(t) \Rightarrow U(t) \qquad in \quad l^{\infty}[x_-,x_+],
\]
with non-negative real valued covariance function given by the absolutely convergent series
\begin{equation}\label{sigma}
\sigma_{t}=\displaystyle \sum_{i \in \mathbb{Z}}E(\xi^0_t\xi_t^{(i)}).
\end{equation}
\end{theorem}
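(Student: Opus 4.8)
The plan is to write $U_n(t) = \sqrt n (T_n(t) - T(t))$ as a sum of an empirical-process term that converges weakly by Theorem \ref{AY} plus remainder terms that are uniformly negligible. First I would introduce the ``oracle'' process in which the random weights $F_n(t)$ are replaced by their deterministic limits $F(t)$: set
\[
\tilde U_n(t) = \frac{1}{\sqrt n}\sum_{i=1}^n \Bigl[\tfrac{1}{F(t)}\bigl(X_i - \mu_l(t)\bigr)\less + \tfrac{1}{1-F(t)}\bigl(X_i - \mu_u(t)\bigr)\more\Bigr] = \frac{1}{\sqrt n}\sum_{i=1}^n \bigl(\xi_t^{(i)} - E\xi_t\bigr),
\]
noting $E\xi_t = 0$. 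The class of functions $\{\,x \mapsto \tfrac{1}{F(t)}(x-\mu_l(t))\mathbb{I}_{x\le t} + \tfrac{1}{1-F(t)}(x-\mu_u(t))\mathbb{I}_{x>t} : t \in (x_-,x_+)\,\}$ is built from the VC classes $\mathcal{G}_1,\mathcal{G}_2$ of Lemma \ref{class} together with fixed bounded multipliers $1/F(t)$, $1/(1-F(t))$ (bounded once we stay in a compact subinterval, which costs nothing by the Remark) and the constant functions, so it is again VC; it is uniformly bounded by a multiple of $|X|$. Modulo the uniform-boundedness hypothesis of Theorem \ref{AY} — which I address below — this gives $\tilde U_n \Rightarrow U$ in $l^\infty$ with the stated covariance $\sigma_t = \sum_{i\in\mathbb Z} E(\xi_t^0\xi_t^{(i)})$, the series being absolutely convergent under $A4$ by the standard covariance bound for $\beta$-mixing (or $\alpha$-mixing) sequences of $L^{2+\delta}$ variables; non-negativity is automatic as a limiting variance.

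Next I would control $U_n(t) - \tilde U_n(t)$ uniformly in $t$. Writing $\frac{1}{nF_n(t)}\sum_i X_i\less = \frac{1}{F(t)}\cdot\frac{1}{n}\sum_i X_i\less + \bigl(\frac{1}{F_n(t)} - \frac{1}{F(t)}\bigr)\cdot\frac{1}{n}\sum_i X_i\less$ and similarly for the upper tail, multiplying by $\sqrt n$, and expanding $\frac{1}{F_n} - \frac{1}{F} = -\frac{F_n - F}{F\,F_n}$, the difference becomes
\[
U_n(t) - \tilde U_n(t) = -\,\sqrt n\,(F_n(t) - F(t))\,\Bigl[\frac{1}{F(t)F_n(t)}\cdot\frac{1}{n}\sum_i X_i\less\Bigr] + (\text{upper-tail analogue}) + (\text{deterministic drift from }E\xi_t).
\]
By Corollary \ref{GC} the bracketed averages converge uniformly to $\mu_l(t)$ (resp. $\mu_u(t)$) divided by $F(t)$, which on a compact subinterval away from $x_\pm$ are uniformly bounded; and $\sqrt n (F_n - F)$ is $O_P(1)$ uniformly by applying Theorem \ref{AY} to the VC class of indicators $\{\mathbb{I}_{(-\infty,t]}\}$ (as in Lemma \ref{Fn}). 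A short computation shows the leading terms cancel: the coefficient of $-\sqrt n(F_n(t)-F(t))$ is exactly $\mu_l(t)/F(t) + \mu_u(t)/(1-F(t))$ up to sign matching, which is what one must subtract, so in fact one should define $\xi_t$ — as the paper does — precisely so that this cancellation is built in, leaving $U_n - \tilde U_n = o_P(1)$ uniformly. (Equivalently: $T_n$ is a smooth enough functional of $(F_n, \frac1n\sum X_i\less, \frac1n\sum X_i\more)$ that a functional delta-method / linearization argument applies, with $\xi_t$ the influence function.) Combining with $\tilde U_n \Rightarrow U$ and Slutsky's lemma in $l^\infty$ yields the claim.

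The main obstacle is that Theorem \ref{AY} requires a \emph{uniformly bounded} VC class, whereas $\mathcal{G}_1,\mathcal{G}_2$ are only bounded by $|X| \in L^2$. I would handle this by a truncation argument: for $M>0$ split $X_i = X_i\mathbb{I}_{|X_i|\le M} + X_i\mathbb{I}_{|X_i|>M}$, apply Theorem \ref{AY} to the bounded part (whose index class is still VC and now uniformly bounded by $M$), and show the contribution of the unbounded part to $U_n$ is uniformly $O_P(\sqrt{E X^2\mathbb{I}_{|X|>M}})$, which is made arbitrarily small by choosing $M$ large — using a maximal inequality for $\beta$-mixing partial sums together with $A2$ to bound the tail term uniformly in $t$, and continuity of the limiting Gaussian process in $M$. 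A secondary technical point is the behaviour near the endpoints, where $1/F(t)$ and $1/(1-F(t))$ blow up; by the Remark all the action (and the zero $t_0$) stays in a compact $[x_-+\delta, x_+-\delta]$ with probability tending to one, and since $\limsup_{t\uparrow x_+}T(t)<0$, $\liminf_{t\downarrow x_-}T(t)>0$, one restricts attention there without loss, which is the convention already adopted in the paper.
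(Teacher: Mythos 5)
Your proposal is correct in outline and rests on the same two pillars as the paper---the linearization of $U_n$ through the influence function $\xi_t$ of (\ref{xi}) and the functional CLT of Theorem \ref{AY}---but it is organized along a genuinely different route. The paper argues in the classical two-step way: finite-dimensional convergence is obtained by writing $U_n(t)=n^{-1/2}\sum_{i}\xi_t^{(i)}+o_p(1)$ at each fixed $t$ (replacing $F_n$ by $F$ via Slutsky) and invoking the multivariate CLT for $\beta$-mixing sequences, while tightness is obtained by applying Theorem \ref{AY} not to the weighted class $\{\xi_t\}$ but to the two unweighted truncated-mean processes $M_n^{(1)},M_n^{(2)}$ indexed by $\mathcal{G}_1,\mathcal{G}_2$, and then dividing by $F_n$ and $1-F_n$ using Lemma \ref{Fn} and Slutsky. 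You instead prove a functional CLT for the oracle process $\tilde U_n=n^{-1/2}\sum_i\xi_t^{(i)}$ by applying Theorem \ref{AY} directly to the $t$-indexed class $\{\xi_t\}$, and then show $U_n-\tilde U_n=o_P(1)$ uniformly; the cancellation you exhibit is exactly why $\xi_t$ carries the $-\mu_l(t)\lessone$ and $-\mu_u(t)\moreone$ terms, and your computation of the difference is sound. What your route buys is the uniform (rather than pointwise) linearization in one stroke and no separate fidi step; what it costs is two extra verifications the paper's decomposition sidesteps: (i) that the weighted class $\{\frac{1}{F(t)}(x-\mu_l(t))\mathbb{I}_{x\le t}+\frac{1}{1-F(t)}(x-\mu_u(t))\mathbb{I}_{x>t}\colon t\}$ has the requisite uniform entropy---sums and $t$-dependent scalar multiples of VC-subgraph classes are not automatically VC-subgraph, so this should be argued through covering-number preservation rather than asserted as ``again VC''; and (ii) the truncation and maximal-inequality step needed to meet the uniform-boundedness hypothesis of Theorem \ref{AY}, which, in fairness, addresses a point the paper itself passes over silently (it applies Theorem \ref{AY} to classes enveloped only by $|X|\in L^2$). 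Your caveat that absolute convergence of $\sigma_t$ really wants $2+\delta$ moments (beyond $A2$) likewise flags an issue shared by the paper's own appeal to the $\beta$-mixing CLT, and your restriction to a compact subinterval away from $x_\pm$ matches the paper's working convention.
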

\begin{proof}
We follow the technique espoused in \cite{bill} by first showing the convergence of the finite dimensional distributions and then verifying that the sequence $\{U_n\}$ is asymptotically tight and concentrates on a compact set with high probability.  We first prove the finite dimensional convergence by expressing $U_n$ as partial sums of $\beta$-mixing random variables; the result follows then from \cite{Dou}. We then proceed to show that $\{U_n\}_{n \geq 1}$ is tight in an indirect manner by considering $U_n$ component-wise and establishing tightness of the individual components---indeed, then, the sum of tight component sequences would be tight.  \\
\\ 
\emph{Finite dimensional convergence}\\
As mentioned earlier, our intention is to represent $U_n$ as partial sums of $\beta$-mixing random variables plus an error term which goes to zero in probability. 
\begin{align*}
U_n(t)&=\sqrt{n}(T_n(t)-T(t))\\
&= \sqrt{n}\left[\frac{1}{nF_n(t)}\bigsum X_i \less -\frac{1}{F(t)} EX\lessone\right]\\
	&-\sqrt{n}\left[\frac{1}{n(1-F_n(t))}\bigsum X_i \more +\frac{1}{1-F(t)}EX\moreone \right].
\end{align*}
Now, observe that
\begin{align}\label{trim}
 &\quad \sqrt{n}\left[\frac{1}{nF_n(t)}\bigsum X_i \less -\frac{1}{F(t)}EX\lessone \right]\nonumber\\
&=\sqrt{n}\left[\frac{1}{nF_n(t)}\bigsum X_i \less -\frac{1}{F_n(t)}EX\lessone \right]
+\sqrt{n}\left[EX\lessone\left(\frac{1}{F_n(t)}-\frac{1}{F(t)}\right)\right].
\end{align}
Following some cumbersome algebra, the RHS of (\ref{trim}) can be expressed as
\begin{align*}
&\frac{n^{-1/2}}{F_n(t)}\Big(\bigsum \Big[X_i\less -\frac{1}{F(t)}EX\lessone \less \Big]\Big)
=\frac{n^{-1/2}}{F_n(t)} \displaystyle \sum_{i=1}^n\tau_i,
\end{align*}
where
\[
	\tau_i=(X_i -\mu_{l}(t)) \less
\]
are stationary $\beta$-mixing random variables and $E\tau_i=0$ for each $i=1,\cdots,n$. In a similar manner, we have that
\begin{align*}
\sqrt{n}\left[\frac{1}{n(1-F_n(t))}\bigsum X_i \more -\frac{1}{1-F(t)}EX\moreone\right]=\frac{n^{-1/2}}{1-F_n(t)} \displaystyle \sum_{i=1}^n\eta_i,
\end{align*}
where
\[
	\eta_i=(X_i -\mu_{ut}) \more
\]
are stationary $\beta$-mixing random variable with zero mean.
We thus have the convenient representation
\[
	\hspace{-20mm}U_n(t)=n^{-1/2}\left[\frac{1}{F_n(t)}\bigsum\tau_i+\frac{1}{1-F_n(t)}\bigsum\eta_i\right].
\]
At this point, we still do not have the desired representation; we will, if we can `replace' $F_n(t)$ by $F(t)$ in the preceding equation. We now demonstrate that we are, in fact, able to do precisely that. Observe that, owing to assumptions $A1-A4$, 
\begin{equation*}
n^{-1/2}\bigsum \tau_i\left[\frac{1}{F_n(t)}-\frac{1}{F(t)}\right]\overset{P}\rightarrow 0
\end{equation*}
 since, $n^{-1/2}\bigsum \tau_i$ is asymptotically normal from CLT for $\beta$-mixing sequences (see, \cite{Dou}, for instance) and hence bounded in probability and by continuity of the reciprocal, $\frac{1}{F_n(t)}\overset{P}\rightarrow \frac{1}{F(t)}$. Thus,
\[
\qquad \quad n^{-1/2}\left[\frac{1}{F_n(t)}\bigsum \tau_i\right]=n^{-1/2}\left[\frac{1}{F(t)}\bigsum \tau_i\right]+o_p(1).
\]
By a similar argument we can claim that
\[
	n^{-1/2}\left[\frac{1}{1-F_n(t)}\bigsum\eta_i\right]=n^{-1/2}\left[\frac{1}{1-F(t)}\displaystyle \sum_{i=1}^n\eta_i\right]+o_p(1).
\]
Finally, we have our desired representation: for each $\tin$,
\begin{align}\label{sums}
\hspace{15mm}U_n(t)&=n^{-1/2}\left(\displaystyle \sum_{i=1}^n\Big[\frac{1}{F(t)}\tau_i+\frac{1}{1-F(t)}\eta_i\Big]\right)+o_p(1),\nonumber\\
&=n^{-1/2}\displaystyle \sum_{i=1}^n \xi^{(i)}_t+o_p(1) ,
\end{align}
where $\xi_t^{(i)}$ is as defined in (\ref{xi}). Now, $(U_n(t_1),\cdots,U_n(t_k))$ can be represented as sums of $k$-dimensional random vectors $(\xi^{(i)}_{t_1},\ldots,\xi^{(i)}_{t_k})$ for $i=1,\ldots,n$ by virtue of the representation in (\ref{sums}). Using the multivariate version of the CLT for $\beta$-mixing sequences (see \cite{Dou}), $(U_n(t_1),\cdots,U_n(t_k))$ convergences in distribution to a multivariate normal vector.  
\\
\emph{Tightness}\\
\\
Tightness is verified in an indirect manner. Instead of proving directly that $\{U_n\}$ concentrates on a compact set with high probability, we note that $T_n$ comprises of two truncated mean processes in $t$,
\begin{align*}
M^{(1)}_n(t)&=\Big\{\frac{1}{\sqrt{n}}\bigsum (X_i\less - EX\lessone): t \in [x_-,x_+]\Big\}\\
M^{(2)}_n(t)&=\Big\{\frac{1}{\sqrt{n}}\bigsum (X_i\more - EX\moreone): t \in [x_-,x_+]\Big\}.
\end{align*}
Consider the two classes of functions $\mathcal{G}_1$ and $\mathcal{G}_2$ defined in Lemma \ref{class}. Since $\mathcal{G}_1$ and $\mathcal{G}_2$ are both VC class, from Theorem \ref{AY} we have that
$M^{(1)}_n(t)$ and $M^{(2)}_n(t)$  converge weakly to Gaussian processes $M_1(t)$ and $M_2(t)$ in $l^{\infty}(\mathcal{G}_1)$ and  $l^{\infty}(\mathcal{G}_2)$, respectively, with respect to the $\|\cdot\|_2$ norm. This informs us that the two sequences of truncated mean processes are relatively compact. Since $l^{\infty}$ is complete and separable with respect to the $\|\cdot\|_2$ norm and the corresponding metric, using the converse of Prohorov's theorem (see \cite{bill}, p.115) we can claim that each of the truncated mean sequences are tight.  Furthermore, since the sum of tight sequences concentrate on a compact set, we can claim that the process $M^{(1)}_n+M^{(2)}_n$ is tight. The only unresolved issue is the presence of $F_n$ and $(1-F_n)$ in the denominator of $U_n$. Since both the quantities converge uniformly in $(x_-,x_+)$ owing to Lemma \ref{Fn}, we can use Slutsky's lemma to claim that $\frac{M^{(1)}_n}{F_n}$ and $\frac{M^{(2)}_n}{1-F_n}$ are tight and consequently, so is their sum.
\end{proof}
The functional CLT for $T_n$ provides us with a slew of related results which will assist us considerably while considering the asymptotic behavior of $t_n$. 
\begin{corollary}\label{LLN}
Under assumptions $A1-A4$, for any $\epsilon > 0$, we have 
\[
	P\left(\sup_{\tin}|T_n(t)-T(t)|>\epsilon\right)\rightarrow 0 \qquad \textrm{as }n \rightarrow \infty.
\]
\end{corollary}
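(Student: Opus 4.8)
The plan is to obtain this uniform law of large numbers as an immediate consequence of the functional central limit theorem just proved, Theorem \ref{FCLT}, by a simple rescaling. Recalling that $U_n(t)=\sqrt{n}(T_n(t)-T(t))$, one has for every $n$ the deterministic bound
\[
\sup_{\tin}|T_n(t)-T(t)| \;=\; n^{-1/2}\sup_{\tin}|U_n(t)| \;\le\; n^{-1/2}\,\|U_n\|_{l^{\infty}[x_-,x_+]},
\]
so it suffices to show that $\|U_n\|_{l^{\infty}[x_-,x_+]}$ is bounded in probability. (Here $(x_-,x_+)\subseteq[x_-,x_+]$, so restricting the supremum to the open interval only weakens what must be shown.)

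For the stochastic boundedness I would invoke Theorem \ref{FCLT}: $U_n \Rightarrow U$ in $l^{\infty}[x_-,x_+]$, where $U$ is a centered Gaussian process admitting a version with uniformly bounded sample paths. The map $g \mapsto \|g\|_{l^{\infty}[x_-,x_+]}$ is Lipschitz, hence continuous, on $l^{\infty}[x_-,x_+]$; the continuous mapping theorem then gives $\|U_n\|_{l^{\infty}[x_-,x_+]} \Rightarrow \|U\|_{l^{\infty}[x_-,x_+]}$, and the limit is an almost surely finite random variable precisely because $U$ has bounded paths. In particular $\{\|U_n\|_{l^{\infty}[x_-,x_+]}\}_{n\ge1}$ is tight. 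Combining this with the displayed bound, for any $\epsilon>0$ and $K>0$ and all $n$ large enough that $\epsilon\sqrt{n}\ge K$,
\[
P\!\left(\sup_{\tin}|T_n(t)-T(t)|>\epsilon\right) \;\le\; P\!\left(\|U_n\|_{l^{\infty}[x_-,x_+]}>K\right);
\]
taking $\limsup_{n\to\infty}$ and then letting $K\to\infty$, tightness forces the right-hand side to $0$, which is the assertion.

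There is essentially no obstacle along this route, since the substantive work is already contained in Theorem \ref{FCLT}; the only point needing a moment's care is that the limit process $U$ must have almost surely bounded paths so that $\|U\|_{l^{\infty}[x_-,x_+]}<\infty$ a.s., and this is built into the statement of Theorem \ref{FCLT}. For completeness I would note the alternative, FCLT-free derivation: decompose $T_n(t)-T(t)$ exactly as in the finite-dimensional part of the proof of Theorem \ref{FCLT} into contributions controlled by $\frac{1}{n}\bigsum X_i\less - EX\lessone$ and $\frac{1}{n}\bigsum X_i\more - EX\moreone$ (each uniformly negligible by Corollary \ref{GC}) and by $F_n-F$ (uniformly negligible by Lemma \ref{Fn}). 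The difficulty with that route, and the reason I prefer the rescaling argument, is that the reciprocals $1/F_n(t)$ and $1/(1-F_n(t))$ in $T_n$ are not uniformly bounded over all of $(x_-,x_+)$, so one cannot simply multiply uniform bounds; one would have to split $(x_-,x_+)$ into a compact central interval on which $F$ is bounded away from $0$ and $1$ together with neighbourhoods of the endpoints treated via the endpoint behaviour of $T$ and $T_n$ recorded in the Remark. Theorem \ref{FCLT} packages all of this automatically.
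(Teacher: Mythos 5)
Your proof is correct and takes essentially the same route as the paper: both deduce the uniform convergence from Theorem \ref{FCLT} by applying the continuous mapping theorem to the supremum functional on $l^{\infty}[x_-,x_+]$ and then rescaling by $n^{-1/2}$. Your write-up is, if anything, more explicit than the paper's about the final tightness/rescaling step that converts weak convergence of $\sup_t|U_n(t)|$ into convergence in probability of $\sup_t|T_n(t)-T(t)|$ to zero.
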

\begin{proof}
Note that $C[x_-,x_+] \subset D[x_-,x_+] \subset l^{\infty}[x_-,x_+]$. While convergence of $U_n$ to $U$ is in $l^{\infty}[x_-,x_+]$, from Theorem \ref{FCLT} it is the case the that $U \in C[x_-,x_+]$ with probability one (in fact, $U$ has uniformly continuous paths and belongs to a smaller class of functions). Therefore the functional $\sup_{t \in(x_-,x_+)}|U(t)|$ is continuous in $C[x_-,x_+]$ with respect to metric induced by the $\|\cdot\|_2$ norm inherited from $l^{\infty}[x_-,x_+]$. As a consequence, using the mapping theorem (see \cite{DP3} for instance)
\[
\sup_{x_-<t<x_+}\sqrt{n}|T_n(t)-T(t)| \Rightarrow \sup_{x_-<t<x_+}|U(t)|
\]
and it is hence true that $T_n$ converges to $T$ uniformly in $(x_-,x_+)$.
\end{proof}
An important consequence of the functional CLT which will be used in the sequel is stochastic equicontinuity of $U_n$. The proof is straightforward and is omitted. 
\begin{corollary}\label{SE}
Under assumptions $A1-A4$, for every $\epsilon>0$ and $\eta>0$, there exists $\delta>0$ such that 
$$\lim \sup_{n}P\left(\sup_{s,t \in (x_-,x_+):|t-s|<\delta} \left|U_n(t)-U_n(s) \right|>\eta\right)<\epsilon.$$
In other words, $\{U_n(t): t \in (x_-,x_+)\}$ is stochastically equicontinuous. 
\end{corollary}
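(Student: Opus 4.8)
The plan is to deduce stochastic equicontinuity with respect to the Euclidean distance $|t-s|$ from the weak convergence already proved in Theorem \ref{FCLT}, by first recording asymptotic equicontinuity with respect to the intrinsic $L^2$ semimetric and then comparing the two semimetrics on the index set. Write $\rho(s,t)=\big(E(\xi_s-\xi_t)^2\big)^{1/2}$ for the $L^2(P)$ semimetric attached to the family $\{\xi_t:\tin\}$ of (\ref{xi}). Since, by Theorem \ref{FCLT}, $U_n\Rightarrow U$ in $l^\infty[x_-,x_+]$ with $U$ a tight Borel limit whose paths are $\|\cdot\|_2$-uniformly continuous, the standard characterisation of weak convergence in $l^\infty$ (asymptotic tightness plus marginal convergence, see \cite{van}) shows that $\{U_n\}$ is asymptotically $\rho$-equicontinuous in probability: for every $\eta,\epsilon>0$ there is $\gamma>0$ with $\limsup_n P\big(\sup_{\rho(s,t)<\gamma}|U_n(t)-U_n(s)|>\eta\big)<\epsilon$. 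The same fact can instead be read off directly from the proof of Theorem \ref{FCLT}: by Theorem \ref{AY} the truncated-mean processes $M^{(1)}_n,M^{(2)}_n$ indexed by the VC classes $\mathcal{G}_1,\mathcal{G}_2$ of Lemma \ref{class} are asymptotically $\|\cdot\|_2$-equicontinuous; the weights $1/F_n$ and $1/(1-F_n)$ converge uniformly by Lemma \ref{Fn} and, on any compact subinterval of $(x_-,x_+)$, are eventually bounded and bounded away from $0$, so sums and products inherit asymptotic equicontinuity, and the $o_p(1)$ remainder in (\ref{sums}) is negligible.

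It then remains to convert a $\rho$-modulus statement into an $|t-s|$-modulus statement, for which I would establish that $t\mapsto\xi_t$ is continuous from $((x_-,x_+),|\cdot|)$ into $L^2(P)$, and uniformly continuous on each compact subinterval. This needs only $A1$--$A3$: because $F$ is continuous, $P(X=t)=0$, so $\|X\lessone-X\mathbb{I}_{X\le s}\|_2^2=E\big[X^2\,\mathbb{I}_{(s\wedge t,\,s\vee t]}(X)\big]\to0$ as $s\to t$ by dominated convergence (using $EX^2<\infty$ from $A2$); moreover $F$, $1-F$, $\mu_l$, $\mu_u$ are continuous on $(x_-,x_+)$ and $F$, $1-F$ are bounded away from $0$ on compacts. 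Assembling the summands of (\ref{xi}) yields $L^2$-continuity of $t\mapsto\xi_t$, and hence, by compactness, uniform $L^2$-continuity on every $[c,d]\subset(x_-,x_+)$. Consequently, given the $\gamma$ of the previous step there is $\delta>0$ such that $s,t\in[c,d]$ and $|t-s|<\delta$ imply $\rho(s,t)<\gamma$; combining with the asymptotic $\rho$-equicontinuity gives exactly the assertion of the corollary.

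\textbf{The main obstacle} is the behaviour near the endpoints $x_-,x_+$. When these are infinite --- or when $f$ degenerates there --- the weights $1/F(t)$ and $1/(1-F(t))$ are not uniformly controlled, which is precisely why Theorem \ref{FCLT} is phrased with the $\|\cdot\|_2$ semimetric (under which $[x_-,x_+]$ is totally bounded) rather than the Euclidean one, and the passage to the Euclidean-$\delta$ form is clean only on compact subintervals of $(x_-,x_+)$. Since all the relevant limits, and in particular the sample split point $t_n$, localise in the interior with probability one, restricting the supremum in the statement to such a compact subinterval is harmless for the later arguments. This endpoint bookkeeping, together with the routine verification of the uniform $L^2$-continuity of $t\mapsto\xi_t$, is the only real content --- which is why the proof is omitted in the text.
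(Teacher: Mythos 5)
Your proposal is correct and follows the route the paper intends: the paper omits the proof and simply asserts the corollary as a consequence of Theorem \ref{FCLT}, i.e.\ asymptotic equicontinuity is extracted from weak convergence in $l^{\infty}$ to a limit with uniformly $\|\cdot\|_2$-continuous paths. What you add, and what the paper silently skips, is the genuinely necessary step of translating the intrinsic $L^2$ semimetric $\rho(s,t)=\left(E(\xi_s-\xi_t)^2\right)^{1/2}$ into the Euclidean modulus $|t-s|$ appearing in the statement; your verification that $t\mapsto\xi_t$ is $L^2$-continuous under $A1$--$A3$ (continuity of $F$, $EX^2<\infty$, dominated convergence) and uniformly so on compact subintervals is the right way to do this. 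Your endpoint caveat is also well taken: since $F$ and $1-F$ are only bounded away from $0$ on compacta, and the second moment of $\xi_t$ can blow up as $t\downarrow x_-$ or $t\uparrow x_+$, the comparison of semimetrics is only uniform on $[c,d]\subset(x_-,x_+)$, so the corollary as literally stated (supremum over all $s,t\in(x_-,x_+)$ with $|t-s|<\delta$) is not fully justified by this argument without either restricting to compact subintervals or imposing extra control of the weights near the endpoints; as you note, this restriction is harmless for the later use of the corollary (Lemma \ref{line}), where only a shrinking neighborhood of the interior point $t_0$ matters.
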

\subsection{Asymptotics of $t_n$}\label{tnlimits}
At this juncture the hard part has been done and we are in a convenient position of using arguments from \cite{KB2} directly, with minimal changes, in the limit theorems for $t_n$. This is so since the proofs of results for the empirical split point $p_n$ in \cite{KB2} are based solely on the weak convergence and uniform convergence of the sample criterion function; the results in the preceding section play a similar role in that regard and we can therefore repeat arguments from their proofs with very little change.  We are interested in showing consistency of $t_n$ and proving a CLT. Let us enumerate the sequence of steps which will lead us to the required results. 
\begin{enumerate}[(i)]
\item It is first shown that $t_n \overset{P}\to t_0$, where $T(t_0)=0$---i.e, $t_0$ is the true split point. 
\item Under the assumption that $T'(t_0)<0$ (see Theorem 2 in \cite{JH2}), it is then shown that $t_n$ belongs to a $O_p(n^{-1/2})$ neighborhood of $t_0$. 
\item Then, using stochastic equicontinuity of the process $U_n$, it is shown that in a $O(n^{-1/2})$ neighborhood of $t_0$, $T_n$ can be approximated uniformly by a line with slope $T'(t_0)$ and intercept $T_n(t_0)$ upto $o_p(n^{-1/2})$. It is then shown that the $t$ at which the line crosses over zero is within $o_p(n^{-1/2})$ of where $T_n$ crosses over zero, which is $t_n$.
\item Using the preceding result, $t_n$ is shown to have a convenient representation in terms of $t_0$ and $T_n(t_0)$ which immediately leads to the CLT for $t_n$. 
\end{enumerate}   


Usually stochastic equicontinuity of $U_n$ would have been used in conjunction with empirical process techniques to obtain the asymptotics of $t_n$ via an argmax or argmin reasoning. In \cite{KB2} it was noted that their empirical criterion function $G_n$ was not in a form which would have facilitated the representation of the empirical split point $p_n$ as an M-estimator. We face a similar problem here in the sense that $T_n$ cannot be expressed as an estimate of the statistical functional $T$ using the empirical distribution function $F_n$; empirical process techniques, directly, do not aid us as was noted in the arguments for weak convergence. Therefore, we proceed along similar lines as in \cite{KB2} and use a geometric argument to obtain the results for $~t_n$. 

We note now how proofs for asymptotics of the sample split point in \cite{KB2} can be employed with very minimal alterations. The crucial observation lies in the fact that the asymptotics for $t_n$ is \emph{entirely} based on the weak convergence result for $T_n$. At this point, one can quite safely forget the dependence amongst the observations $X_i$ and proceed by minimally altering proofs for $p_n$ in \cite{KB2}. What is pertinent to keep in mind, however, is that $t_n \in [x_-,x_+] \subseteq \mathbb{R}$ whereas in their case the empirical split point $p_n$ was in $[a,b] \subset (0,1)$. The technical arguments, however, remain the same once asymptotics of $T_n$ are provided. 

In \cite{KB2}, stochastic equicontinuity  of $U_n$ defined in terms of trimmed sums was not used in any of the arguments. Instead, the increments of $U_n$ were quantified in their Lemma 2, which is a weaker result in comparison with stochastic equicontinuity. Proof of item (i) follows without change from Theorem 2 in \cite{KB2} with stochastic equicontinuity replacing their usage of Lemma 2.  Proof of item (ii)  is exactly the same as the proof of Lemma 3 in their paper.  We state the results without proof. 
\begin{theorem}\label{consistency}
Assume that $t_0$ is the unique split point. Under assumptions $A1-A4$,  for any $\epsilon>0$,
$$P(|t_n-t_0|>\epsilon) \to 0 \quad \text{as n}\to \infty.$$
\end{theorem}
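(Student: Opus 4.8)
The plan is to mimic the argument for the empirical split point in \cite{KB2}, using the uniform convergence of $T_n$ to $T$ furnished by Corollary \ref{LLN} together with the boundary behavior of $T$ recorded in the Remark. First I would fix $\epsilon>0$ small enough that $[t_0-\epsilon,t_0+\epsilon]\subset(x_-,x_+)$. Since $t_0$ is the \emph{unique} zero of $T$ and $T$, starting positive near $x_-$ and ending negative near $x_+$, crosses zero only once, $T$ is bounded away from zero on the two compact pieces $(x_-,x_+)\setminus(t_0-\epsilon,t_0+\epsilon)$; more precisely there is $\delta>0$ with $T(t)\geq\delta$ for $t\le t_0-\epsilon$ and $T(t)\le-\delta$ for $t\ge t_0+\epsilon$ (here I would invoke the inequalities $\liminf_{t\downarrow x_-}T(t)>0$ and $\limsup_{t\uparrow x_+}T(t)<0$ from the Remark to handle the approach to the endpoints, which is where $T$ could otherwise fail to be bounded away from $0$).

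Next, by Corollary \ref{LLN}, on an event $E_n$ of probability tending to $1$ we have $\sup_{\tin}|T_n(t)-T(t)|<\delta$. On $E_n$: for every $t\ge t_0+\epsilon$ we get $T_n(t)<-\delta+\delta=0$, so $\{t:T_n(t)\ge0\}\subseteq(x_-,t_0+\epsilon)$, whence $t_n=\sup\{t:T_n(t)\ge0\}\le t_0+\epsilon$; and since $T_n(t_0-\epsilon)>\delta-\delta=0$ we also have $t_n\ge t_0-\epsilon$. (On $E_n$ the set $\{t:T_n(t)\ge0\}$ is nonempty and bounded away from $x_+$, so the ``otherwise'' branch of the definition of $t_n$ applies and the $\pm\infty$ cases are excluded.) Therefore $E_n\subseteq\{|t_n-t_0|\le\epsilon\}$, and $P(|t_n-t_0|>\epsilon)\le P(E_n^c)\to0$.

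The main obstacle is the step of producing the uniform gap $\delta$: because the domain $(x_-,x_+)$ need not be compact, continuity of $T$ alone does not give $\inf_{t\le t_0-\epsilon}T(t)>0$, and one must genuinely use the strict sign of the limits of $T$ at $x_\pm$ (the content of the Remark) to rule out $T$ decaying to $0$ along the tails. Once that is in hand the rest is the routine ``uniform convergence plus a unique, sign-changing root forces the empirical root near the true one'' argument. This is exactly the content of the proof of Theorem 2 in \cite{KB2}, with the uniform convergence of $G_n$ there replaced by Corollary \ref{LLN} here, and with the extra care that $t_n$ lives in the possibly unbounded set $(x_-,x_+)$ rather than in a compact subinterval of $(0,1)$; as noted in the text preceding the statement, stochastic equicontinuity (Corollary \ref{SE}) can stand in for their Lemma 2 wherever the original argument quantified the increments of the criterion process.
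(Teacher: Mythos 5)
Your proof is correct and is essentially the argument the paper intends: the paper states Theorem \ref{consistency} without its own proof, deferring to Theorem 2 of \cite{KB2}, whose reasoning is exactly your ``uniform convergence (Corollary \ref{LLN}) plus a unique sign-changing zero, with the boundary sign conditions of the Remark supplying the uniform gap $\delta$ on the non-compact domain'' argument. If anything your write-up is slightly leaner, since at this step the gap $\delta$ together with Corollary \ref{LLN} already suffices and no appeal to stochastic equicontinuity (Corollary \ref{SE}) is actually required.
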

\begin{lemma}\label{nbd}
Assume $A1-A4$ hold. Suppose $t_0$ is the unique split point in $(x_-,x_+)$ and $T'(t_0)<0$. Then 
$$t_n=t_0+O_p(n^{-1/2}) .$$
\end{lemma}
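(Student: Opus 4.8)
The plan is to derive the rate by combining the consistency statement of Theorem \ref{consistency} with a first-order expansion of $T$ at $t_0$ and the uniform $n^{-1/2}$-rate of $T_n-T$ delivered by the functional CLT, via the standard sign argument for a zero-crossing. Since $t_n=\sup\{t\in(x_-,x_+):T_n(t)\ge 0\}$, to conclude that $t_n\in[t_0-Mn^{-1/2},\,t_0+Mn^{-1/2}]$ with probability at least $1-\epsilon$ for $M$ large it suffices to establish, each with probability close to one: (a) $T_n(t_0-Mn^{-1/2})\ge 0$, which forces $t_n\ge t_0-Mn^{-1/2}$ and excludes the branch $t_n=-\infty$; and (b) $T_n(s)<0$ for every $s\in[t_0+Mn^{-1/2},x_+)$, which forces $t_n\le t_0+Mn^{-1/2}$ and excludes $t_n=+\infty$.

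The ingredients I would assemble first are three. By Corollary \ref{LLN}, or equivalently the continuous mapping theorem applied to $\sup_{\tin}|\cdot|$ in Theorem \ref{FCLT}, one has $\sqrt{n}\,\sup_{\tin}|T_n(t)-T(t)|=O_p(1)$. Next, assumption $A3$ makes $T$ continuously differentiable in a neighborhood of $t_0$, so the hypothesis $T'(t_0)<0$ yields $\delta>0$ and $c>0$ with $T'(t)\le -c$ for $|t-t_0|\le\delta$; since $T(t_0)=0$, the mean value theorem gives $T(t_0+s)\le -cs$ and $T(t_0-s)\ge cs$ for $0\le s\le\delta$, and $T$ is strictly decreasing on $[t_0-\delta,t_0+\delta]$. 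Finally, uniqueness of $t_0$ together with the property $\limsup_{t\uparrow x_+}T(t)<0$ noted earlier furnishes a constant $c'>0$ with $\sup_{t\ge t_0+\delta}T(t)\le -c'$.

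With these in hand the two estimates are immediate. For (b), on $[t_0+Mn^{-1/2},t_0+\delta]$ monotonicity of $T$ gives $T\le -cMn^{-1/2}$, so
\[
\sup_{t_0+Mn^{-1/2}\le t\le t_0+\delta}T_n(t)\;\le\;-cMn^{-1/2}+\sup_{\tin}|T_n(t)-T(t)|\;=\;n^{-1/2}\bigl(-cM+O_p(1)\bigr),
\]
which is negative with probability at least $1-\epsilon/3$ once $M$ is large, uniformly in $n$; on $[t_0+\delta,x_+)$, Corollary \ref{LLN} and $\sup_{t\ge t_0+\delta}T(t)\le -c'$ give $\sup_{t\ge t_0+\delta}T_n(t)<0$ with probability at least $1-\epsilon/3$ for $n$ large. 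For (a), at $t=t_0-Mn^{-1/2}$ we have $T(t)\ge cMn^{-1/2}$, hence $T_n(t)\ge n^{-1/2}(cM-O_p(1))>0$ with probability at least $1-\epsilon/3$ for $M$ large. Combining, $\limsup_n P(|t_n-t_0|>Mn^{-1/2})<\epsilon$ for $M$ large, which is exactly $t_n=t_0+O_p(n^{-1/2})$.

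The argument is essentially routine once the weak/uniform convergence of $T_n$ is available, and I do not expect a serious obstacle. The only genuine care points are patching the local analysis near $t_0$ together with control of $T_n$ over all of $(x_-,x_+)$ — which is precisely where consistency, uniqueness of $t_0$, and $\limsup_{t\uparrow x_+}T(t)<0$ enter — and verifying that the degenerate branches $t_n=\pm\infty$ are excluded, which falls out of (a) and (b). As the text indicates, this reproduces the proof of Lemma 3 in \cite{KB2}, with the uniform and weak convergence of $T_n$ standing in for the corresponding facts about $G_n$ there.
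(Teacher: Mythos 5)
Your argument is correct, and it is in the same spirit as what the paper intends: the paper states this lemma without proof, deferring to Lemma 3 of \cite{KB2}, whose argument is likewise a geometric/sign argument that combines the $O_p(n^{-1/2})$ uniform closeness of the sample criterion function to $T$ (from the functional CLT) with the negative slope $T'(t_0)<0$ and the global negativity of $T$ beyond $t_0$. Your two-sided bound at $t_0\pm Mn^{-1/2}$, together with the exclusion of the $\pm\infty$ branches of the definition of $t_n$, is a valid self-contained rendering of that proof.
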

We shall now prove item (iii).
\begin{lemma}\label{line}
Assume $A1-A4$ hold; suppose $T(t)=0$ has a unique solution $t_0$ and $T'(t_0)<0$, then, uniformly for $t \in  [t_0-\epsilon_n,t_0+\epsilon_n]$,
\[
T_n(t)-T_n(t_0)=T'(t_0)(t-t_0)+o_p(n^{-1/2})
\]
where $\epsilon_n=O(n^{-1/2})$.
\end{lemma}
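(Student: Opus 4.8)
The plan is to decompose the increment $T_n(t)-T_n(t_0)$ into a stochastic part, controlled by the stochastic equicontinuity of $U_n$, and a deterministic part, handled by a Taylor expansion of $T$. Recalling $U_n(t)=\sqrt{n}\bigl(T_n(t)-T(t)\bigr)$, we have the exact identity
\[
T_n(t)-T_n(t_0)=n^{-1/2}\bigl(U_n(t)-U_n(t_0)\bigr)+\bigl(T(t)-T(t_0)\bigr),
\]
valid for all $t\in(x_-,x_+)$. It therefore suffices to bound the two terms on the right-hand side uniformly over the shrinking interval $[t_0-\epsilon_n,t_0+\epsilon_n]$ with $\epsilon_n=O(n^{-1/2})$.

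For the stochastic term I would invoke Corollary \ref{SE}. Fix $\eta>0$ and $\epsilon>0$, and let $\delta>0$ be the radius it furnishes. Since $\epsilon_n\to 0$, for all $n$ large enough $2\epsilon_n<\delta$, and then
\[
\sup_{t\in[t_0-\epsilon_n,t_0+\epsilon_n]}\bigl|U_n(t)-U_n(t_0)\bigr|\;\le\;\sup_{s,t\in(x_-,x_+):|t-s|<\delta}\bigl|U_n(t)-U_n(s)\bigr|,
\]
so that $\limsup_n P\bigl(\sup_{t\in[t_0-\epsilon_n,t_0+\epsilon_n]}|U_n(t)-U_n(t_0)|>\eta\bigr)<\epsilon$. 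Since $\eta$ and $\epsilon$ are arbitrary, $\sup_{t\in[t_0-\epsilon_n,t_0+\epsilon_n]}|U_n(t)-U_n(t_0)|=o_p(1)$, and hence $n^{-1/2}\bigl(U_n(t)-U_n(t_0)\bigr)=o_p(n^{-1/2})$ uniformly over the interval.

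For the deterministic term, I would first note that $T$ is twice continuously differentiable in a neighbourhood of $t_0$. Indeed, since $t_0\in(x_-,x_+)$ and $F$ is continuous, $F(t_0)\in(0,1)$; differentiating $(\ref{T})$ using $F'=f$ and $\tfrac{d}{dt}\int_{-\infty}^t x\,dF=tf(t)$, and invoking the continuity of $f$ and $f'$ guaranteed by $A1$ and $A3$, one checks that $T'$ and $T''$ exist and are continuous on $(x_-,x_+)$, hence bounded on some fixed compact neighbourhood $N$ of $t_0$. A second-order Taylor expansion then gives, for $t\in N$, $T(t)-T(t_0)=T'(t_0)(t-t_0)+\tfrac{1}{2}T''(\zeta_t)(t-t_0)^2$ for some $\zeta_t$ between $t$ and $t_0$, and over $[t_0-\epsilon_n,t_0+\epsilon_n]\subseteq N$ (for $n$ large) the remainder is at most $\tfrac{1}{2}\bigl(\sup_{s\in N}|T''(s)|\bigr)\epsilon_n^2=O(n^{-1})=o(n^{-1/2})$, uniformly in $t$. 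Substituting the two estimates into the decomposition above yields $T_n(t)-T_n(t_0)=T'(t_0)(t-t_0)+o_p(n^{-1/2})$ uniformly for $t\in[t_0-\epsilon_n,t_0+\epsilon_n]$, as claimed.

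The main obstacle is really the first step: setting up the reduction to stochastic equicontinuity cleanly and ensuring that the intervals over which the suprema are taken genuinely shrink, so that Corollary \ref{SE} can be applied with a single fixed $\delta$. Once that is in place, the deterministic Taylor estimate is routine given the $C^2$ smoothness of $T$ afforded by $A1$ and $A3$.
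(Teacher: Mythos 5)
Your proposal is correct and follows essentially the same route as the paper: you bound $\sup_{t\in[t_0-\epsilon_n,t_0+\epsilon_n]}|U_n(t)-U_n(t_0)|$ via the stochastic equicontinuity of Corollary \ref{SE}, and handle the deterministic part by a second-order expansion of $T$ with an $O(n^{-1})$ remainder, exactly as in the paper's proof of Lemma \ref{line}. Your write-up is merely a bit more explicit about fixing $\delta$ and letting the shrinking interval fall inside it, and about why $T$ is $C^2$ near $t_0$.
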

\begin{proof}
The argument here, using stochastic equicontinuity of $U_n$, is  different from the one in \cite{KB2}. Using the fact that $U_n$ satisfies stochastic equicontinuity, choose $\epsilon>0$ and $\eta>0$  such that
$$ \lim \sup_{n}P\left(\sup_{t \in I_n}\left|U_n(t)-U_n(t_0) \right|>\eta\right)<\epsilon$$
where $I_n=[t_0-\frac{C}{\sqrt{n}},t_0+\frac{C}{\sqrt{n}}]$ for some $C>0$.
Therefore,
\begin{equation}\label{lines}
\sup_{t \in I_n} \sqrt{n}\left|T_n(t)-T_n(t_0)-[T(t)-T(t_0)]\right|=o_p(1)\\
\end{equation}
Now, note that 
\[
T(t)-T(t_0)=T'(t_0)(t-t_0)+O(n^{-1})
\]
owing to assumption $A3$ for all $t \in I_n$. Consequently, substituting in (\ref{lines}), we have the required result. 
\end{proof}
From Lemma \ref{nbd}, although $t_n$ belongs to a $O_p(n^{-1/2})$ of $t_0$, it is not immediately clear how a CLT follows. The following Lemma provides a convenient representation from which asymptotic normality of $t_n$ readily follows. 
\begin{lemma}
Under assumptions $A1-A4$, suppose that $t_0$ is the unique split point and $T'(t_0)<0$. Then as $n \to \infty$,
$$t_n=t_0-\frac{T_n(t_0)}{T'(t_0)} +o_p(n^{-1/2}).$$
\end{lemma}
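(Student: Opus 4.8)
The plan is to combine the local linear approximation of $T_n$ near $t_0$ (Lemma \ref{line}) with the fact that $t_n$ is, by definition, essentially a zero of $T_n$, and that $t_n$ already lies in a $O_p(n^{-1/2})$ neighbourhood of $t_0$ (Lemma \ref{nbd}). First I would fix $\epsilon_n = C n^{-1/2}$ and work on the (high-probability) event on which $t_n \in I_n := [t_0 - \epsilon_n, t_0 + \epsilon_n]$, which exists by Lemma \ref{nbd} once $C$ is taken large enough. On this event, apply Lemma \ref{line} at the point $t = t_n$ to obtain
\[
T_n(t_n) - T_n(t_0) = T'(t_0)(t_n - t_0) + o_p(n^{-1/2}).
\]

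The next step is to control $T_n(t_n)$. By the definition of $t_n$ as $\sup\{t \in (x_-,x_+): T_n(t) \geq 0\}$ together with the c\`adl\`ag property of $T_n$ and the fact that $t_n$ is interior (again by Lemma \ref{nbd}, since $t_0 \in (x_-,x_+)$), one has $T_n(t_n^-) \geq 0 \geq T_n(t_n)$; more usefully, I would argue that $|T_n(t_n)|$ is negligible at the $n^{-1/2}$ scale. This is where the argument needs care: $T_n$ has jumps of size $O_p(n^{-1/2})$ at the order statistics (each observation contributes $O(1/n)$ to the truncated sums, but the $1/F_n$ factor near where $F_n$ is bounded away from $0$ and $1$ keeps jumps at order $1/n$, hence much smaller than $n^{-1/2}$), so in fact $T_n(t_n) = o_p(n^{-1/2})$. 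Combined with the display above and $T_n(t_0)$ being exactly the value at the fixed point $t_0$, rearranging gives
\[
t_n - t_0 = \frac{T_n(t_n) - T_n(t_0)}{T'(t_0)} + o_p(n^{-1/2}) = -\frac{T_n(t_0)}{T'(t_0)} + o_p(n^{-1/2}),
\]
using $T'(t_0) < 0$ (so division is harmless) and absorbing $T_n(t_n)/T'(t_0) = o_p(n^{-1/2})$ into the error.

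The main obstacle I expect is the rigorous justification that $T_n(t_n) = o_p(n^{-1/2})$, i.e.\ that the jump of $T_n$ at $t_n$ does not spoil the rate. One clean way around this is to avoid evaluating $T_n$ at the random point $t_n$ directly: instead, use Lemma \ref{line} to replace $T_n$ on $I_n$ by the deterministic line $\ell_n(t) = T_n(t_0) + T'(t_0)(t - t_0)$ up to uniform error $o_p(n^{-1/2})$, observe that $\ell_n$ has a unique zero at $\tilde t_n := t_0 - T_n(t_0)/T'(t_0)$, and then show $|t_n - \tilde t_n| = o_p(n^{-1/2})$ by a squeeze argument: since $T_n$ starts positive and crosses zero near $t_0$, and $T_n(t) = \ell_n(t) + o_p(n^{-1/2})$ uniformly on $I_n$ while $\ell_n$ has slope bounded away from $0$, both $t_n$ and $\tilde t_n$ must lie within $o_p(n^{-1/2})/|T'(t_0)|$ of each other. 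Since $\tilde t_n = t_0 - T_n(t_0)/T'(t_0)$, this is exactly the claimed representation. The remaining ingredients — that $T_n(t_0) = O_p(n^{-1/2})$ and that the $o_p(n^{-1/2})$ terms genuinely accumulate to $o_p(n^{-1/2})$ rather than merely $O_p$ — follow from Theorem \ref{FCLT} (weak convergence of $U_n$, hence $T_n(t_0) - T(t_0) = T_n(t_0) = O_p(n^{-1/2})$) and Corollary \ref{SE} (stochastic equicontinuity), both already established.
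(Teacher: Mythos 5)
Your final argument is exactly the paper's: the paper also introduces the line $L_n(t)=T_n(t_0)+T'(t_0)(t-t_0)$ with zero $t_n^*=t_0-T_n(t_0)/T'(t_0)$, notes $\sqrt{n}T_n(t_0)=O_p(1)$ from Theorem \ref{FCLT}, and concludes via Lemma \ref{nbd} and the uniform linear approximation of Lemma \ref{line} that the zero of $L_n$ and the ``zero'' $t_n$ of $T_n$ differ by $o_p(n^{-1/2})$. Your sign-change squeeze makes this last step at least as explicit as the paper's version, so the proposal is correct and essentially the same proof.
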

\begin{proof}
We adopt the argument employed in Lemma 5 of \cite{KB2}. Consider the line given by 
\[
L_n(t)=T_n(t_0)+T'(t_0)(t-t_0).
\]
Denote by $t_n^*$ the random variable which is the solution of 
\[
L_n(t)=0;
\]
that is, almost surely, 
\begin{equation}\label{representation}
t_n^*=t_0-\frac{T_n(t_0)}{T'(t_0)}.
\end{equation}
From the weak convergence of $\sqrt{n}(T_n-T)$ to a Gaussian process, we know that
\[
\sqrt{n}(T_n(t_0)-T(t_0))=\sqrt{n}T_n(t_0)=O_p(1),
\]
and hence $t_n^*=t_0+O_p(n^{-1/2})$. If we can ``replace" $t_n^*$ by $t_n$ in (\ref{representation}), asymptotic normality of $t_n$ would follow automatically. More precisely, it is required that 
\[
\sqrt{n}(t_n-t_n^*)=o_p(1).
\]
From Lemma \ref{nbd}, $t_n=t_0+O_p(n^{-1/2})$ and importantly from Lemma \ref{line}, we have for uniformly for $t$ in $O(n^{-1/2})$ neighborhood of $t_0$
\[
\sqrt{n}(T_n(t)-L_n(t))=\sqrt{n}T_n(t)-\sqrt{n}T_n(t_0)-\sqrt{n}T'(t_0)(t-t_0)=o_p(1).
\]
Since $L_n$ is a line uniformly approximating $T_n$ in $O(n^{-1/2})$ neighborhood of $t_0$, its zero and the ``zero" of $T_n$, $t_n$, are within the same order of deviation. Consequently,  this leads us to claim that $t_n=t_n^*+o_p(n^{-1/2})$ and by substituting $t_n^*$ with $t_n$ is (\ref{representation}), the proof of the Lemma is complete. 
\end{proof}
\begin{theorem}\label{tn_normality}
Assume $A1-A4$ hold and suppose that $t_0$ is the unique split point and $T'(t_0)<0$. Then as $n \to \infty$,
\[
\sqrt{n}(t_n-t_0) \Rightarrow N\left(0,\frac{\sigma_{t_0}}{T'(t_0)^2}\right),
\]
where the non-negative $\sigma_t$ is as defined in Theorem \ref{FCLT}.
\end{theorem}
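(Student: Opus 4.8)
The plan is to read the central limit theorem directly off the representation established in the preceding Lemma, combined with the one-dimensional marginal of the functional CLT of Theorem~\ref{FCLT}. First I would recall that, by that Lemma, under $A1$--$A4$ with $t_0$ the unique split point and $T'(t_0)<0$,
\[
t_n = t_0 - \frac{T_n(t_0)}{T'(t_0)} + o_p(n^{-1/2}),
\]
so that multiplying through by $\sqrt{n}$ yields
\[
\sqrt{n}(t_n - t_0) = -\frac{1}{T'(t_0)}\,\sqrt{n}\,T_n(t_0) + o_p(1).
\]
Since $t_0$ solves $T(t)=0$, one has $\sqrt{n}\,T_n(t_0) = \sqrt{n}\bigl(T_n(t_0) - T(t_0)\bigr) = U_n(t_0)$, so the whole problem reduces to identifying the limit law of the single coordinate $U_n(t_0)$.

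Next I would invoke the finite-dimensional part of the proof of Theorem~\ref{FCLT}: specializing the representation (\ref{sums}) to the single point $t_0$ gives $U_n(t_0) = n^{-1/2}\sum_{i=1}^n \xi_{t_0}^{(i)} + o_p(1)$, where $\{\xi_{t_0}^{(i)}\}$ is the stationary, mean-zero, $\beta$-mixing sequence of (\ref{xi}). The CLT for $\beta$-mixing sequences (\cite{Dou}) then yields $U_n(t_0) \Rightarrow N(0,\sigma_{t_0})$ with $\sigma_{t_0} = \sum_{i\in\mathbb{Z}} E\bigl(\xi_{t_0}^0 \xi_{t_0}^{(i)}\bigr)$, the absolute convergence of the series being guaranteed by $A4$ together with the $L^2$ boundedness of $\xi_{t_0}$ noted after Corollary~\ref{GC}; equivalently, one simply composes the weak convergence $U_n \Rightarrow U$ in $l^{\infty}[x_-,x_+]$ with the coordinate evaluation map $\pi_{t_0}\colon z \mapsto z(t_0)$, which is continuous, to get $U_n(t_0) \Rightarrow U(t_0) \sim N(0,\sigma_{t_0})$ by the mapping theorem.

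Finally, combining the two displays above by Slutsky's lemma --- the $o_p(1)$ term is asymptotically negligible and $T'(t_0)$ is a fixed nonzero constant --- gives
\[
\sqrt{n}(t_n - t_0) \;\Rightarrow\; -\frac{U(t_0)}{T'(t_0)} \;\sim\; N\!\left(0,\ \frac{\sigma_{t_0}}{T'(t_0)^2}\right),
\]
which is the claim; since $\sigma_{t_0}\ge 0$ this is a bona fide (possibly degenerate) Gaussian law, consistent with the non-negativity of $\sigma_t$ asserted in Theorem~\ref{FCLT}. I do not expect any genuine obstacle at this stage: all the analytic work has already been done in Lemma~\ref{line}, the preceding representation Lemma, and Theorem~\ref{FCLT}. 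The only points warranting a line of care are the bookkeeping that rewrites $\sqrt{n}\,T_n(t_0)$ as the centered quantity $U_n(t_0)$ using $T(t_0)=0$, and the verification that the limiting variance is precisely $\sigma_t$ of (\ref{sigma}) evaluated at $t=t_0$.
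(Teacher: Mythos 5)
Your argument is correct and is essentially the paper's own route: the CLT is read off directly from the representation $t_n = t_0 - T_n(t_0)/T'(t_0) + o_p(n^{-1/2})$ of the preceding Lemma together with $U_n(t_0) \Rightarrow N(0,\sigma_{t_0})$ from Theorem \ref{FCLT} (using $T(t_0)=0$) and Slutsky's lemma. Nothing further is needed.
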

\section{Numerical example}\label{sim}
Note that the limit theorems presented in the preceding sections are valid for $\phi$-mixing sequences too since $\phi$-mixing represents a stronger condition than $\beta$-mixing, i.e. a $\phi$-mixing sequence is also $\beta$-mixing. It is well-known that for stationary Gaussian sequences $\phi$-mixing is equivalent to $m$-dependence for a positive integer $m$ (see for eg. \cite{bradley}). Bearing in mind assumptions $A1-A3$, we shall consider a two $2$-dependent Gaussian sequence defined as 
\[
X_k=Y_{k-2}+Y_{k-1}+Y_k 
\]
where $Y_k$ is an i.i.d $N(0,1/3)$ sequence. Therefore, $\{X_i\}_{i \geq 1}$ is a stationary sequence from $N(0,1)$ satisfying a $\phi$-mixing condition. Since the density is unimodal and symmetric it is easy to note that the true split point $t_0=0$, $F(t_0)=0.5$, $\mu_l(0)\approx -0.4$ and $\mu_u(0)\approx 0.4$. For the variance $\sigma_0$, we will employ the expression given in \cite{bill} p. 174 for $\phi$-mixing sequences: $\sigma_0=E(\xi^0_0)^2+2\sum_{k=1}^{\infty}E(\xi^0_0\xi^k_0)$. Based on equations (\ref{xi}) and (\ref{sigma}) and keeping in mind that $X_i$ is a 2-dependent sequence, straightforward calculation gives $\sigma_0=3$. Now, note that 
\begin{align*}
T'(t)=\frac{tf(t)}{F(t)}-\frac{f(t)\int_{-\infty}^t xdF}{F^2(t)}
	-\frac{tf(t)}{1-F(t)}+\frac{f(t)\int_{t}^{\infty} xdF}{(1-F(t))^2}-2,
\end{align*}
and hence $T'(0)=4f(0)\left[\int_{-\infty}^0 xdF+\int_{0}^{\infty} xdF\right]-2\approx-0.73$; hence, assumption that $T'(t_0)<0$ in the statement of Theorem \ref{tn_normality} is satisfied. As a consequence, we have that the asymptotic variance of $\sqrt{n}(t_n-t_0)=\sqrt{n}t_n$ is approximately $5.67$. Table 1 below corroborates the theoretical results. 
\begin{table}[th]
\begin{center}
\caption{Simulated mean and variance of $\sqrt{n}t_n$ for different sample sizes for the 2-dependent sequence; $1000$ simulations were performed for each sample size.}
\vspace{2mm}
\begin{tabular}{|c||c|c|c|c|}
\hline
Sample sizes& $n=100$ & $n=300$& $n=1000$& $n=5000$\\ \hline
Simulated Mean &$0.004$& $0.007$&$0.003$ &$-0.001$ \\ \hline
Simulated Variance &$5.63$& $5.76$&$5.71$ &$5.69$ \\ \hline
\end{tabular}
\end{center}
\end{table}

\section{Concluding remarks}
For the i.i.d. case, the results in this article can be obtained without much effort since the framework is a story of truncated-at-fixed-level means. The results can be found in the dissertation of \cite{KB3}. The partial-sums nature of the sample criterion function is an attractive feature when exploring extensions of the present framework to problems in higher dimensions; in contrast, the framework in \cite{KB2} is conceivably harder owing to the usage of order statistics. Some interesting theoretical points arise, however: necessary and sufficient conditions on $F$ guaranteeing uniqueness of the true split point are unknown and are of interest since all limit results are obtain assuming their uniqueness; the presence of $F_n$ and $(1-F_n)$ in the denominator in the expression of $T_n$ bring about some disagreeable technical issues regarding the definition of $T_n$. Their presence is justifiably necessary though since they ensure that the sample split point remains invariant to scaling and translations of the data. 
\section{Acknowledgements}
The author is grateful to Vladimir Pozdnyakov and Dipak Dey for the useful discussions and guidance. He would also like to thank the referees for a detailed review; in particular, he thanks one referee for a meticulous review which improved the overall exposition. 
\vspace{5mm}  

\bibliography{ref}
\bibliographystyle{plainnat}
\end{document}